\theoremstyle{definition}
  \newtheorem{definition}[subsection]{Definition}
  \newtheorem{definition-proposition}[subsection]{Definition-Proposition}
  \newtheorem{example}[subsection]{Example}
  \newtheorem{remark}[subsection]{Remark}
   \newtheorem{assumption}[subsection]{Assumption}
\theoremstyle{theorem}
  \newtheorem{theorem}[subsection]{Theorem}
  \newtheorem{proposition}[subsection]{Proposition}
  \newtheorem*{proposition*}{Proposition}
  \newtheorem{lemma}[subsection]{Lemma}
  \newtheorem{corollary}[subsection]{Corollary}
  \newtheorem{conjecture}[subsection]{Conjecture}
\newcommand{\Cbb}{\mathbb{C}}
\newcommand{\Gbb}{\mathbb{G}}
\newcommand{\Nbb}{\mathbb{N}}
\newcommand{\Qbb}{\mathbb{Q}}
\newcommand{\Rbb}{\mathbb{R}}
\newcommand{\Sbb}{\mathbb{S}}
\newcommand{\Tbb}{\mathbb{T}}
\newcommand{\Zbb}{\mathbb{Z}}
\newcommand{\Zbhat}{{\hat{\mathbb{Z}}}}
\newcommand{\Hbf}{\mathbf{H}}
\newcommand{\Gbf}{\mathbf{G}}
\newcommand{\Pbf}{\mathbf{P}}
\newcommand{\Tbf}{\mathbf{T}}
\newcommand{\Vbf}{\mathbf{V}}
\newcommand{\mrm}{\mathrm{m}}
\newcommand{\Hscr}{\mathscr{H}}
\newcommand{\Oscr}{\mathscr{O}}
\newcommand{\Xcal}{\mathcal{X}}
\newcommand{\ad}{\mathrm{ad}}
\newcommand{\ra}{\rightarrow}
\newcommand{\lra}{\longrightarrow}
\newcommand{\mono}{\hookrightarrow}
\newcommand{\isom}{\cong}
\newcommand{\limproj}{\varprojlim}
\newcommand{\Hom}{\mathrm{Hom}}
\newcommand{\GSp}{\mathrm{GSp}}
\newcommand{\Ker}{\mathrm{Ker}}
\newcommand{\Lie}{\mathrm{Lie}}
\newcommand{\GL}{{\mathbf{GL}}}
\newcommand{\bsh}{\backslash}
\newcommand{\inv}{{-1}}
\newcommand{\ot}{\overset}
\newcommand{\id}{\mathrm{id}}
\newcommand{\wrt}{{with\ respect\ to}\xspace}
\newcommand{\ifof}{{if\ and\ only\ if}\xspace}
\newcommand{\cosg}{{compact\ open\ subgroup}\xspace}
\newcommand{\Gal}{\mathrm{Gal}}
\newcommand{\vbar}{{\bar{v}}}
\newcommand{\adele}{{\hat{\mathbb{Q}}}}
\newcommand{\Aut}{{\mathrm{Aut}}}
\newcommand{\Ahat}{\hat{A}}
\newcommand{\Shat}{\hat{S}}
\newcommand{\xbar}{{\bar{x}}}
\newcommand{\Vscr}{\mathscr{V}}
\newcommand{\End}{\mathrm{End}}
\newcommand{\Endbf}{\mathbf{End}}
\newcommand{\etabar}{{\bar{\eta}}}
\newcommand{\Fscr}{\mathscr{F}}
\newcommand{\Fbar}{{\bar{F}}}
\newcommand{\MT}{{\mathrm{MT}}}
\newcommand{\Der}{{\mathrm{Der}}}
\title{a note on the Manin-Mumford conjecture}
\author{Ke Chen}
\address{Department of Mathematics, University of Science and Technology of China, 230026 Hefei, Anhui Province, China}
\email{kechen@ustc.edu.cn}
\subjclass{Primary 14G35(11G18), Secondary 14K05}
\keywords{Shimura varieties, mixed Shimura varieties, Andr\'e-Oort conjecture, special subvarieties, Diophantine approximation, equidistribution}
\begin{document}

\begin{abstract}We prove a variant of the Manin-Mumford conjecture for abelian schemes over a normal base scheme of characteristic zero. The proof is reduced to the Manin-Mumford conjecture over fields of characteristic zero, through a theorem of Grothendieck on the endomorphisms of abelian schemes. The theorem implies a case of the Andr\'e-Oort conjecture for Kuga varieties, without resorting to the O-minimality approach nor the ergodic-Galois approach.
\end{abstract}
\maketitle
\tableofcontents

\section*{introduction}

In this paper we discuss a variant of the Manin-Mumford conjecture for abelian schemes and its relation to the Andr\'e-Oort conjecture for Kuga varieties.

\begin{conjecture}[Manin-Mumford]\label{manin mumford} Let $A$ be an abelian variety over $\Cbb$, with $(a_i)_{i\in I}$ a family of torsion points. Then the Zariski closure of $\{a_i\}_{i\in I}$ is a finite union of torsion subvarieties, i.e. subvarieties of the form $A'+a'$ where $A'\subset A$ is an abelian subvariety and $+a'$ stands for the translation by some torsion point $a'\in A(\Cbb)$.
\end{conjecture}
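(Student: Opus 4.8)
Manin--Mumford is by now a theorem, proved by methods of very different flavours (Raynaud's original $p$-adic argument, Hindry's Galois-theoretic one, Hrushovski's via difference fields, the Ullmo--Zhang equidistribution proof of the stronger Bogomolov conjecture, and the Pila--Zannier proof via o-minimality); the route I would take is the Pila--Zannier one, since it separates the archimedean and the arithmetic inputs most transparently. First I would reduce to a number field. The pair $(A,X)$, with $X$ the Zariski closure of $\{a_i\}_{i\in I}$, is defined over a subring $R\subseteq\Cbb$ finitely generated over $\Zbb$; spreading $(A,X)$ together with the torsion points out over $\Spec R$ and specialising at a suitable closed point whose residue field is a number field $K$---the torsion points can be kept distinct, since being torsion is a horizontal condition---reduces the statement to the case where $A$ is an abelian variety over $K$ and $X\subseteq A$ is a geometrically irreducible subvariety carrying a Zariski-dense set of torsion points, to be shown a torsion subvariety. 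Passing to a counterexample of minimal dimension and quotienting by the identity component of $\Stab(X)=\{a\in A:a+X=X\}$ (this lowers $\dim X$, preserves density of torsion, and pulls a torsion coset back to a torsion coset), I may moreover assume $\Stab(X)$ is finite and $\dim X\ge 1$; it then suffices to show that no such $X$ can contain a Zariski-dense set of torsion points.

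On the archimedean side I would fix a complex uniformisation $\pi:\Cbb^g\to A(\Cbb)=\Cbb^g/\Lambda$ and a $\Zbb$-basis of $\Lambda$, so that $A(\Cbb)\cong\Rbb^{2g}/\Zbb^{2g}$ and the torsion points are exactly the images of $\Qbb^{2g}$, a point of exact order $n$ having denominator $n$ and hence height of order $n$. Let $\tilde X\subseteq[0,1)^{2g}$ be the preimage of $X$ under the restriction of $\pi$ to a fundamental parallelepiped; by the Peterzil--Starchenko theorem on the definability of abelian uniformisations, $\tilde X$ is definable in the o-minimal structure $\Rbb_{\an,\exp}$. The Pila--Wilkie counting theorem then bounds, for each $\varepsilon>0$, the number of rational points of height $\le T$ on $\tilde X$ lying outside $\tilde X^{\alg}$---the union of all connected positive-dimensional semialgebraic subsets of $\tilde X$---by $\ll_{\varepsilon}T^{\varepsilon}$. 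I would complement this with the Ax--Lindemann theorem for abelian varieties, which shows that $\pi(\tilde X^{\alg})$ is contained in the union of all translates of positive-dimensional abelian subvarieties of $A$ contained in $X$. Let $W$ be the Zariski closure of that union; it is defined over $K$ (the construction is Galois-stable), and it is a proper subvariety of $X$: were it Zariski-dense in $X$, then---there being only countably many abelian subvarieties of $A$ and $X$ being irreducible---a single one of positive dimension, say $B$, would already have a Zariski-dense family of translates contained in $X$, and hence $B\subseteq\Stab(X)$, contradicting finiteness.

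On the arithmetic side I would invoke Masser's lower bound: a torsion point $a\in A(\bar K)$ of exact order $n$ satisfies $[K(a):K]\gg_A n^{\delta}$ for some $\delta=\delta(A)>0$. Since $W$ is defined over $K$, a torsion point $a\in X$ of order $n$ lying outside $W$ would have all of its $\gg_A n^{\delta}$ Galois conjugates outside $W$ as well; these are torsion points of $X$ of the same order $n$, lifting (in the fundamental parallelepiped) to $\gg_A n^{\delta}$ rational points of height $O(n)$ on $\tilde X\setminus\tilde X^{\alg}$---they avoid $\tilde X^{\alg}$ precisely because $\pi(\tilde X^{\alg})\subseteq W$---which contradicts the Pila--Wilkie bound, applied with any $\varepsilon<\delta$, once $n$ is large. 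Hence every torsion point of $X$ of sufficiently large order lies in $W$; since those of bounded order form a finite set, $X$ would be contained in the union of $W$ with a finite set, which is impossible as $X$ is irreducible of dimension $\ge 1$ and $W\subsetneq X$. This contradiction finishes the reduced statement, and unwinding the reductions yields the conjecture.

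I expect the main obstacle to lie in the o-minimal geometric package: the Peterzil--Starchenko definability of the uniformisation restricted to a fundamental domain, and the Ax--Lindemann theorem for abelian varieties, which together control $\tilde X^{\alg}$ and pin $W$ down below $X$. The only strictly arithmetic input is Masser's polynomial lower bound for the degree of fields of definition of torsion points, and it is this bound that necessitates the descent to a number field; the spreading-out reduction itself is routine, but must be carried out carefully enough that the Galois-theoretic estimate survives the specialisation.
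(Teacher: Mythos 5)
The paper does not prove this statement at all: it is labelled a conjecture because the author treats it as a known theorem, citing the proofs of Raynaud, Faltings--Hindry, Hrushovski, Ratazzi--Ullmo and Pila--Zannier, and then uses it as a black box in the proof of Proposition \ref{relative Manin-Mumford}. So there is no ``paper's route'' to compare against; what you have written is an outline of the Pila--Zannier proof, one of the cited references, and as an outline it is faithful: the stabilizer reduction, definability of the uniformization on a fundamental domain (for a single abelian variety you do not need the full strength of Peterzil--Starchenko, since the restriction of a holomorphic map to a compact box is already definable in $\Rbb_{\an}$), the Pila--Wilkie count, Ax--Lindemann identifying the algebraic part with the union $W$ of positive-dimensional torsion cosets inside $X$, the countability-plus-stabilizer argument giving $W\subsetneq X$, and Masser's polynomial lower bound for Galois orbits of torsion points are exactly the right ingredients in the right order.

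The one step you should not call routine is the return trip in the descent to a number field. Spreading out and specializing at a point with number-field residue field gives Manin--Mumford for the special fibre, say $X_s=B_s+t_s$, but endomorphism rings and abelian subvarieties can grow under specialization, so $B_s$ need not lift to an abelian subvariety of the generic fibre for an arbitrary choice of specialization; ``unwinding the reductions'' is therefore not automatic. Two standard repairs: (a) choose the specialization among the $\overline{\Qbb}$-points at which the generic Mumford--Tate group (equivalently, the endomorphism ring of $A\times A$) does not degenerate --- such points exist by a theorem of Andr\'e and Noot --- after which $B_s$ lifts and the identity $X=B+t$ follows from the injectivity of specialization on torsion; or (b) do not specialize the statement at all, only the arithmetic input, i.e.\ run the o-minimal argument over the finitely generated field of definition and deduce the Galois lower bound there by pushing Masser's estimate through a decomposition group above a number-field-valued point. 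Either patch closes the gap; as written, the sentence ``unwinding the reductions yields the conjecture'' is the one place where your sketch asserts more than it proves.
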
 Note that we may replace $(a_n)$ by a sequence of torsion subvarieties, because a torsion subvariety is the Zariski closure of the set of torsion points in it.

The conjecture was first proved by M. Raynaud using $p$-adic methods, cf. \cite{raynaud curve}, \cite{raynaud manin mumford}. When $A$ is defined over a number field, Faltings proved the more general Mordell-Lang conjecture which implies the Manin-Mumford conjecture, cf. \cite{faltings lang}, \cite{faltings barsotti}, as well as \cite{hindry}. There have been many other proofs, like the ergodic-Galois approach in \cite{ratazzi ullmo},   the model-theoretic approach of E. Hrushovski  \cite{bouscaren}, and the o-minimality approach by J. Pila and U. Zannier \cite{pila wilkie}.

In \cite{pink combination}, R. Pink has proposed  a conjecture for mixed Shimura varieties as a combination of the Andr\'e-Oort conjecture and the Mordell-Lang conjecture. It is further generalized into the Zilber-Pink conjecture. In this paper, we restrict our attention to a principal case of the conjecture of Pink which combines the Andr\'e-Oort conjecture with the Manin-Mumford conjecture:

\begin{conjecture}[Andr\'e-Oort conjecture for Kuga varieties]\label{andre oort conjecture for kuga varieties} Let $M$ be a Kuga variety, and let $(M_i)$ be a family of special subvarieties in $M$. Then the Zariski closure of $\bigcup_iM_i$ is a finite union of special subvarieties in $M$.
\end{conjecture}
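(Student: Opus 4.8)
The plan is to reduce Conjecture~\ref{andre oort conjecture for kuga varieties} to three inputs: the Manin--Mumford conjecture over fields (Conjecture~\ref{manin mumford}, which is a theorem), Andr\'e--Oort for the underlying \emph{pure} Shimura variety, and Grothendieck's rigidity theorem for abelian schemes over a normal base. Recall that a Kuga variety is a mixed Shimura variety which is an abelian scheme $\pi\colon M\to S$ over a connected pure Shimura variety $S$, arising from a morphism of mixed Shimura data, and that the special subvarieties of $M$ project to special subvarieties of $S$ and, over a suitable finite cover of the base, are torsion subvarieties of the abelian scheme (abelian subschemes translated by torsion sections). First I would make the standard reductions: we may assume $Z:=\overline{\bigcup_iM_i}$ is irreducible; since $\pi$ is proper, $\pi(Z)=\overline{\bigcup_i\pi(M_i)}$, and each $\pi(M_i)$ is special in $S$ because the image of a special subvariety under a morphism of Shimura varieties is special; hence by Andr\'e--Oort for the pure Shimura variety $S$ (known in many cases) together with the irreducibility of $Z$ we get $\pi(Z)=S_0$ for a single special subvariety $S_0\subseteq S$. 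Replacing $S$ by $S_0$, which is again a connected Shimura variety with $M_{S_0}\to S_0$ again a Kuga variety containing $Z$, we may assume $\pi$ is surjective.

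Now suppose---and this is the case the method reaches---that a Zariski--dense subfamily of $(M_i)$ dominates $S$. Passing to a geometric generic point $\bar\eta$ of $S$, the fibre $Z_{\bar\eta}\subseteq\mathcal{A}_{\bar\eta}$, an abelian variety over the algebraically closed field $\overline{\kappa(\eta)}$ of characteristic zero, then contains the Zariski--dense set of torsion points consisting of the $(M_i)_{\bar\eta}$ for those $M_i$ that dominate $S$. By the Manin--Mumford conjecture over fields of characteristic zero (Conjecture~\ref{manin mumford}, valid over any algebraically closed field of characteristic zero by the Lefschetz principle), $Z_{\bar\eta}$ is a finite union of torsion subvarieties of $\mathcal{A}_{\bar\eta}$, forming a single Galois orbit since $Z$ is irreducible; after a congruence cover $\bar S\to S$---again a connected Shimura variety, with $M_{\bar S}\to M$ a morphism of Shimura varieties---a member of this orbit is of the form $\tau+B$ with $B\subseteq\mathcal{A}_{\bar\eta}$ an abelian subvariety and $\tau$ a torsion point, defined over the function field $L$ of $\bar S$. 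Here Grothendieck's theorem on the endomorphisms of abelian schemes over a normal base enters: $B$ extends uniquely to an abelian subscheme $\mathcal{B}\subseteq\mathcal{A}_{\bar S}$, and likewise $\tau$ extends to a torsion section $\tilde\tau$ over $\bar S$ (using the finite étaleness of the torsion subschemes $\mathcal{A}[n]$ and the normality of $\bar S$). Then $\tilde\tau+\mathcal{B}$ is a torsion subvariety of the abelian scheme $\mathcal{A}_{\bar S}$, hence a special subvariety of $M_{\bar S}$, and its image $N$ in $M$ is again special; $N$ has geometric generic fibre equal to the whole Galois orbit $Z_{\bar\eta}$, so $N=Z$ and $Z$ is special. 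This last step---Manin--Mumford on the geometric generic fibre followed by Grothendieck's extension---is precisely the ``variant of Manin--Mumford for abelian schemes over a normal base'' of the abstract.

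The main obstacle is the hypothesis invoked at the start of the second paragraph. If, after reducing to $S_0=\pi(Z)$, the $M_i$ that are dense in $Z$ all have $\pi(M_i)$ a \emph{proper} special subvariety of $S_0$ while their union is dense in $S_0$, then no torsion points on $Z_{\bar\eta}$ are produced and one faces an unlikely intersection \emph{in the fibres}, of Masser--Zannier type---for instance a curve inside an elliptic surface passing through infinitely many points that are torsion in their fibres over CM points of the base. No proof of such statements is presently known that avoids the o-minimality (Pila--Zannier) method, which is deliberately not used here; so only the ``dominant'' case is obtained, and it is this case of the conjecture that the paper settles---unconditionally whenever Andr\'e--Oort is known for the pure base. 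A secondary, purely technical point requiring care is the precise form of Grothendieck's rigidity over a normal base in characteristic zero, its compatibility with the finite cover $\bar S\to S$ (so that $\bar S$ can be taken to be a Shimura variety), and the verification that the image $N\subseteq M$ really is a special subvariety in the mixed Shimura sense; but these are classical.
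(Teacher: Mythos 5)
The statement you were asked about is an open conjecture, and the paper does not prove it in full either: its main theorem (Theorem \ref{main theorem}) establishes only the case where every $M_n$ satisfies $\pi(M_n)=S$, and your proposal honestly arrives at exactly the same boundary. For that accessible case your route is essentially the paper's: pass to a finite \'etale (congruence) cover trivializing the monodromy, apply Manin--Mumford to the geometric generic fibre, and use Grothendieck's theorem on homomorphisms of abelian schemes over a normal base to spread the resulting abelian subvariety and torsion point out to an abelian subscheme and a torsion section, then descend. This is precisely the content of the paper's Section 2 (Corollary \ref{descent to finite level}, Proposition \ref{relative Manin-Mumford}), with the identification of the resulting ``special subschemes'' with special subvarieties carried out in Section 4 via the equivalence with polarized variations of Hodge structure --- a step you correctly flag as needing verification but dismiss a little quickly; it is the whole of the paper's Section 4 and uses the generic Mumford--Tate group, not just formal functoriality.

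The one genuine divergence is your opening reduction: you invoke Andr\'e--Oort for the pure Shimura variety $S$ to replace $S$ by $\pi(Z)$ and thereby reach a surjective situation. The paper deliberately avoids this --- it simply builds the hypothesis $\pi(M_n)=S$ into the theorem, so that the result is unconditional and independent of the o-minimality and ergodic--Galois machinery. Your version buys a formally stronger statement (any family whose dense members dominate their common image) at the price of conditionality on pure Andr\'e--Oort for $S$. Two small imprecisions: the fibres $(M_i)_{\bar\eta}$ of dominant special subvarieties are finite unions of \emph{torsion subvarieties} (translates of abelian subvarieties by torsion points), not torsion points --- harmless, since Manin--Mumford applies to families of torsion subvarieties, as the paper notes after Conjecture \ref{manin mumford}; and the extension of the abelian subvariety $B$ over $\bar S$ is not a one-line application of Grothendieck's theorem --- one must first realize $B$ as the neutral component of the kernel of an endomorphism (the splitting theorem, \ref{splitting}) and then use the constancy of $\Endbf_S(A)$ over the monodromy-trivializing cover (\ref{constant subsheaf}). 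Your identification of the remaining obstruction --- special subvarieties lying over proper special subvarieties of the base, i.e.\ unlikely intersections of Masser--Zannier type in the fibres --- is exactly right and is why the full conjecture is out of reach by these methods.
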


Here Kuga varieties $M$ appear in the form of an abelian scheme $\pi:M\ra S$ where $S$ is a pure Shimura variety, typically corresponding to some moduli problem of abelian varieties, and $M$ is the universal family of abelian varieties over $S$. Special subvarieties in $M$ arise from diagrams of the following form $$\xymatrix{M' \ar[r]^\subset &  M_T \ar[d]^{\pi_{|T}} \ar[r]^\subset & M\ar[d]^\pi\\ & T\ar[r]^\subset & S  }$$ with $T\subset S$ a moduli subspace (corresponding to abelian varieties with finer additional symmetries), $M_{T}\ra T$ is the abelian $T$-scheme pulled-back from $M\ra S$, and $M'\subset M_T$ is a special subscheme in the sense of \ref{special subscheme} below, which is ''roughly'' an abelian subscheme translated by some torsion section. 

Of course one may replace Kuga varieties by general mixed Shimura varieties. But the technique and results in this paper mainly focus on abelian schemes and Kuga varieties.

There have been remarkable progresses towards the Andr\'e-Oort conjecture, cf. \cite{yafaev bordeaux} for the ergodic-Galois approach and cf. \cite{scanlon bourbaki} for a survey of the o-minimality approach of J. Pila. In the case of mixed Shimura varieties, \cite{chen kuga} has proved the equidistribution of certain families of special subvarieties in Kuga varieties, and Z. Gao has proved the Andr\'e-Oort conjecture for general mixed Shimura varieties whose pure part are Siegel modular varieties $A_g$, cf.\cite{gao}. The result of Gao is inconditional for $g\leq 6$, and relies on the GRH for CM fields when $g> 6$, as a generalization of previous results by J. Pila, J. Tsimerman, etc. 

In this paper we prove the following statement:
\begin{theorem}[main theorem]\label{main theorem} Let $\pi:M\ra S$ be a Kuga variety fibred over a pure Shimura variety, with $(M_n)$ a sequence of special subvarieties such that $\pi(M_n)=S$ for all $n$. Then the Zariski closure of $\bigcup_nM_n$ is a finite union special subvarieties whose images under $\pi$ are equal to $S$.
\end{theorem}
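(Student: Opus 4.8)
The plan is to reduce the theorem to the Manin-Mumford conjecture (Conjecture~\ref{manin mumford}), applied not to a single abelian variety but to the generic fibre of the abelian scheme $\pi\colon M\to S$, and then to descend the resulting structure from the generic fibre to an honest family of special subvarieties over all of $S$ by means of Grothendieck's theorem on endomorphisms of abelian schemes, exactly as advertised in the abstract. Concretely, let $Z$ denote the Zariski closure of $\bigcup_n M_n$ in $M$. Since every $M_n$ dominates $S$, so does $Z$; replacing $S$ by a suitable open dense subscheme and $Z$ by the closure of its restriction, I may assume $Z\to S$ is flat with irreducible generic fibre components. First I would pass to the generic point $\eta$ of $S$: the fibres $(M_n)_\eta$ are special subschemes of the abelian variety $M_\eta$ over the function field $\kappa(\eta)$, and a special subscheme is, up to a finite-index issue handled by \ref{special subscheme}, an abelian subscheme translated by a torsion section; so over $\etabar$ each $(M_n)_{\etabar}$ is a torsion subvariety of $M_{\etabar}$.

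The second step is to invoke the Manin-Mumford conjecture over the field $\kappa(\etabar)$: the Zariski closure in $M_{\etabar}$ of $\bigcup_n (M_n)_{\etabar}$ is a finite union $\bigcup_{j} (B_j + t_j)$ of torsion subvarieties, where $B_j\subset M_{\etabar}$ is an abelian subvariety and $t_j$ is a torsion point. This closure is exactly $Z_{\etabar}$ by flatness and the construction of $Z$. Now I would like to say that each $B_j$ and each $t_j$ is defined over a finite extension of $\kappa(\eta)$ and, after further shrinking $S$, spreads out to an abelian subscheme $\mathcal{B}_j\subset M$ over $S$ (or over a finite étale cover $S'\to S$) and a torsion section $\tau_j$. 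The abelian subscheme part is where Grothendieck's theorem enters: an abelian subvariety of $M_\eta$ is cut out by an idempotent in $\End(M_\eta)\otimes\Qbb$, and Grothendieck's theorem tells us that, after a connected finite étale base change, $\End(M_\eta)=\End(M/S)$, so the idempotent spreads to an endomorphism of the abelian scheme and hence defines an abelian subscheme $\mathcal{B}_j\to S$. The torsion section $t_j$ spreads out trivially after passing to the relevant multiple of $S'$. Thus $Z_{S'}$ contains $\bigcup_j(\mathcal{B}_j+\tau_j)$ as a closed subscheme, with equality over the generic point, hence equality over all of $S'$ after possibly shrinking once more.

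The third step is a descent and compactification argument: one has produced, over a dense open $U\subset S$ (or a finite cover), a finite union of ``relative torsion subvarieties'' $\mathcal{B}_j+\tau_j$ whose union equals $Z_U$. It remains to check (i) that these $\mathcal{B}_j+\tau_j$ are in fact special subvarieties of the Kuga variety $M$ in the sense of the diagram in the introduction — here one uses that an abelian subscheme of $M\to S$ defined by a Hodge-theoretic idempotent, together with a torsion translate, is precisely the shape of a special subscheme over $T=S$, invoking \ref{special subscheme} — and (ii) that the closure of $\mathcal{B}_j+\tau_j$ taken over all of $S$ (not just $U$) is still special, which follows because the closure of a special subvariety is special and because each such closure still dominates $S$. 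Finally one descends from $S'$ back to $S$: the Galois group of $S'/S$ permutes the $\mathcal{B}_j+\tau_j$, and a special subvariety's image under a Hecke/Galois translate is again special, so the union over a Galois orbit descends to a special subvariety of $M$ dominating $S$. Assembling these gives that $Z$ itself is a finite union of special subvarieties each surjecting onto $S$, as claimed.

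The main obstacle I expect is the passage from the \emph{geometric} generic fibre back to a \emph{global} family — that is, controlling the field of definition of the abelian subvarieties $B_j$ and torsion points $t_j$ appearing in the Manin-Mumford output, and ensuring that ``spreading out'' over a finite étale cover of $S$ genuinely produces abelian \emph{subschemes} rather than merely subvarieties of finitely many fibres. This is precisely the point at which Grothendieck's rigidity theorem for endomorphisms of abelian schemes does the essential work, and the delicate part is checking the hypotheses of that theorem (properness, smoothness, the base being normal of characteristic zero) are preserved under the various shrinkings and covers, and that the resulting abelian subscheme is compatible with the Hodge-theoretic description of special subschemes rather than just being an abstract abelian subscheme.
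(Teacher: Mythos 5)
Your strategy is essentially the one the paper follows: reduce to Manin--Mumford on the (geometric) generic fibre, spread the resulting torsion subvarieties out over the base using Grothendieck's theorem on homomorphisms of abelian schemes, and then recognize the output as special subvarieties of the Kuga variety. Two remarks on the execution. First, the shrinking of $S$ to a dense open $U$ is unnecessary and creates the awkward point (ii): Grothendieck's theorem \ref{grothendieck}(1) extends a Tate-module homomorphism that is algebraic at one point to an endomorphism of the abelian scheme over the \emph{entire} locally noetherian normal integral base, so the abelian subschemes $\mathcal{B}_j$ (realized via the splitting theorem \ref{splitting} as neutral components of kernels of endomorphisms) exist over all of $\Shat$, and descend to a finite level $S_N$ by \ref{descent to finite level}, with no open-dense restriction. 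Your appeal to ``the closure of a special subvariety is special'' is circular as written, since specialness of that closure is part of what is being proved; working over the full base from the start, as the paper does in \ref{relative Manin-Mumford}, removes the issue.

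Second, and more seriously, your step (i) is where the real content lies, and it is not supplied by the definition \ref{special sections and special subschemes}. What Manin--Mumford plus spreading out gives you is a \emph{special subscheme} of the abelian scheme $M\ra S$: an abelian $S$-subscheme translated by a torsion section, up to finite \'etale base change. The theorem asks for \emph{special subvarieties} of the Kuga variety, i.e.\ images of Kuga subdata $\Vbf'\rtimes(v\Gbf v^\inv,vX)$. The abelian subvariety $B_j\subset M_{\etabar}$ produced by Manin--Mumford is a priori arbitrary; its spread-out version corresponds to a sub-VHS of $\Hscr_\Qbb$ whose underlying local system is only known to be stable under the monodromy group $\Gamma_\Gbf$, whereas a Kuga subdatum requires the corresponding subspace $\Vbf'\subset\Vbf$ to be stable under the full generic Mumford--Tate group $\Gbf$. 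Bridging this is the content of the paper's Section 4: one identifies $\Endbf^\circ_S(M)$ with the type $(0,0)$ Hodge classes $\End(\Vbf)^{\Gbf}$, so that the kernel of the relevant endomorphism is automatically a $\Gbf$-subrepresentation of type $\{(-1,0),(0,-1)\}$ and hence defines a Kuga subdatum. Without this step your argument establishes the relative Manin--Mumford statement \ref{relative Manin-Mumford} but not the conclusion about special subvarieties; you flag this as ``the delicate part'' at the end, but the proof needs it filled in.
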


 It relies on a relative version of the Manin-Mumford conjecture for abelian schemes over a normal base scheme of characteristic zero. Although the arguments are elementary, even without the estimation of degrees, Galois orbits, etc., it does imply unconditionally a special case of the Andr\'e-Oort conjecture for general Kuga varieties, which is not fully covered in  \cite{chen kuga} and \cite{chen bounded}. We hope that it is useful as a footnote to the Andr\'e-Oort conjecture.

The paper is organized as follows. In Section 1, we recall the basic notions of abelian schemes, special subschemes, monodromy representations, etc. In Section 2 we prove a relative Manin-Mumford conjecture for   abelian schemes using a theorem of Grothendieck. In Section 3, we recall the basic notions of fibred Kuga varieties, and their special subvarieties. In Section 4, we use some results in Hodge theory to show that special subvarieties in Kuga variety that are faithfully flat over the base Shimura varieties are exactly the special subschemes when we view the Kuga variety as the total space of an abelian scheme, which finishes the proof.



\section{special subschemes in abelian schemes}

We recall some basic notions of abelian schemes, details of which can be found in \cite{mumford abelian}.

\begin{definition}[abelian schemes and endomorphisms]\label{abelian schemes and endomorphisms}

Let $S$ be a scheme.

(1) An abelian $S$-scheme is a proper smooth $S$-scheme $\pi:A\ra S$ equipped with a group law. The group law  is necessarily commutative, and  it is written additively.

(2) Let $A\ra S$ be an abelian $S$-scheme. We write $\End_S(A)$ for the ring of endomorphisms of the abelian $S$-scheme $A$, i.e. morphisms of the $S$-scheme $A$ respecting the group law. We write $\Endbf_S(A)$ of the \'etale sheaf $U\mapsto\End_U(A_U)$. Similarly, we have the ring of endomorphisms of $A\ra S$ up to isogeny, namely $\End_S^\circ(A):=\End_S(A)\otimes_\Zbb\Qbb$, and the \'etale sheaf $\Endbf_S^\circ(A)$. In practice we only need these sheaves on the finite \'etale sites.

(3) Let $A\ra S$ be an abelian $S$-scheme. An abelian $S$-subscheme is just a smooth closed $S$-subgroup of $A\ra S$.

\end{definition}

\begin{definition-proposition}[torsions and Tate modules]\label{torsions and tate modules}
Let $A\ra S$ be an abelian $S$-scheme of relative dimension $g$. We assume for simplicity that $S$ is connected, and we fix a geometric point $x$ of $S$. Write $\pi_1(S)=\pi_1(S,x)$ for the \'etale fundamental group of $S$ with base point $x$.

(1) For an integer $N\neq 0$, we have the endomorphism $[N]:A\ra A$, sending a section $a$ to the $N$-th multiple $a+\cdots+a$ ($N$-fold).  

The endomorphism $[N]:A\ra A$ is always flat, and its kernel $A[N]:=\Ker[N]$ is a finite flat group $S$-scheme.


When $N$ is invertible over $S$, $A[N]$ is finite \'etale over $S$. In this case, the group $\pi_1(S,x)$ acts on the fiber $A[N]_x$ respecting the group law, hence it defines a continuous representation $\rho[N]:\pi_1(S,x)\ra\GL_{\Zbb/N}(A[N]_x)$, which we call the monodromy representation of $\pi_1(S,x)$ on the $N$-torsion points. The kernel of $\rho[N]$ is a normal cofinite subgroup of $\pi_1(S,x)$ corresponding to a finite \'etale Galois covering $S_N$ of $S$.  $S_N$ is universal in the sense that if $T\ra S$ is a morphism of schemes such that in $A_T\ra T$ we have $A_T[N]\isom (\Zbb/N)^{2g}_T$ is a constant \'etale sheaf over $T$, then $T$ factors through $S_N\ra S$ uniquely.

(2) For $\ell$ a rational prime, we have the integral $\ell$-adic Tate module $\Tbb_\ell A=\limproj_nA[\ell^n]$, and the rational $\ell$-adic Tate module $\Tbb^\circ_\ell A=\Tbb_\ell A\otimes_{\Zbb_{\ell S}}\Qbb_{\ell S}$. When $\ell$ is invertible on $S$, the action of $\pi_1(S,x)$ gives a continuous $\ell$-adic representation $\rho_\ell:\pi_1(S,x)\ra\GL_{\Zbb_\ell}(\Tbb_\ell A_x)$, which is called the $\ell$-adic monodromy representation of $\pi_1(S,x)$ for $A\ra S$. Note that when $\ell$ is invertible on $S$,  $\Tbb_\ell A_x$ is isomorphic to $\Zbb_\ell^{2g}$ as a topological abelian group.

Similarly, when $S$ is of characteristic zero, we have the total Tate module $\Tbb A=\limproj_NA[N]$, and the adelic Tate module $\Tbb^\circ A=\Tbb A\otimes_{\Zbhat_S}\adele_S$. We also have the continuous monodromy representation $\rho:\pi_1(S,x)\ra\GL_{\Zbhat}(\Tbb A_x)$, with $\Tbb A_x\isom\Zbhat^{2g}$. 

In particular, the kernel of $\rho:\pi_1(S,x)\ra\GL_{\Zbhat}(\Tbb A_x)$ corresponds to a pro-finite \'etale covering $\Shat\ra S$, such that for any integer $N\neq 0$, $\Ahat[N]$ is a disjoint union of $N^{2g}$ sections, where $\Ahat\ra \Shat$ is the base change of $A\ra S$ along $\Shat\ra S$.


\end{definition-proposition}
In the rest of the paper, we assume that $S$ is an integral scheme of characteristic zero.

To formulate our main results, we need the following variants of torsion points and torsion subvarieties:

\begin{definition}[special sections and special subschemes]\label{special sections and special subschemes} Let $A\ra S$ be an abelian scheme.

(1) A special section is the image of some morphism of the form $S_N\mono A_N\ra A$, where $S_N\ra A_N$ is a torsion section of $A_N\ra S_N$ following the notations in \ref{torsions and tate modules}(1), and $A_N\ra A$ is the natural projection from the base change $A_N=A\times_SS_N$. Using finite \'etale descent, one verifies easily that special sections of $A\ra S$ are $S$-subschemes that are finite \'etale over $S$ such that after some finite \'etale base change it splits into a disjoint union of torsion sections: if $S'\subset A$ is a special section, then its preimage along some $A_N\ra A$ is the orbit of a torsion section under $\pi_1(S,x)$.

(2) A special subscheme is the image of some morphism of the form $B_N\mono A_N\ra A$ for some $N\in\Nbb_{>0}$, where $A_N=A\times_SS_N$ as in (1), and $B_N=A'_N+t_N$, where $A'_N\mono A_N$ is an abelian $S_N$-subscheme, and $t_N$ is an $N$-torsion section of $A_N\ra S_N$. Since the image of $A'_N$ in $A$ is an abelian $S$-subscheme $A'$, we may think of the special subscheme as the $\pi_1(S,x)$-orbit of the translation of $A'$ by some torsion section.

\end{definition}

When the monodromy representation is trivial, special sections are exactly torsion sections, and we have

\begin{lemma}[generic fiber]\label{geneic fiber} Let $S$ be an integral scheme of charcteristic zero, and let $A\ra S$ be an abelian $S$-scheme of relative dimension $g$. Write $\eta$ for the generic point of $S$ with function field $F$, and $\etabar$ the geometric point given by the separable closure $\Fbar$ of $F$. If the monodromy representation $\pi_1(S,\eta)\ra\GL_\Zbhat\Tbb A_\eta$ is trivial, then we have a bijection between torsion sections of $A\ra S$ and torsion points in $A_\eta$, sending a torsion section to its generic fiber.

\end{lemma}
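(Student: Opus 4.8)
The plan is to study the ``restrict to the generic fibre'' map $s\mapsto s_\eta$ directly: injectivity will come from separatedness of $A/S$, and surjectivity from the triviality of the monodromy, which forces every finite layer $A[N]\to S$ to be a split finite \'etale cover.

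First, a torsion section $s\colon S\to A$ annihilated by $[N]$ restricts to a morphism $s_\eta\colon\eta\to A_\eta$ with $[N]\circ s_\eta=0$, i.e. an $F$-rational torsion point of $A_\eta$ of order dividing $N$; so the assignment is well defined. For injectivity, let $s,t\colon S\to A$ be two sections agreeing on $\eta$. Because $\pi\colon A\to S$ is proper, hence separated, the equaliser of $s$ and $t$ --- the pullback of the diagonal $A\hookrightarrow A\times_SA$ along $(s,t)\colon S\to A\times_SA$ --- is a closed subscheme $Z\subseteq S$, and the map $\eta\to S$ factors through it. Since $S$ is integral, the only closed subscheme through which the generic point factors is $S$ itself, so $Z=S$ and $s=t$.

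For surjectivity, fix an integer $N>0$. As $\operatorname{char}S=0$, $N$ is invertible on $S$, so $A[N]\to S$ is finite \'etale by \ref{torsions and tate modules}(1), corresponding under the Galois correspondence for $\pi_1(S,\eta)$ to the finite set $A[N]_{\bar\eta}\cong(\Zbb/N)^{2g}$ equipped with the monodromy action. Triviality of $\rho$ on $\Tbb A_\eta$ implies triviality of its reduction on $A[N]_{\bar\eta}$, so this $\pi_1(S,\eta)$-set is trivial and therefore $A[N]\cong\coprod_{i=1}^{N^{2g}}S$ as $S$-schemes. Two consequences: pulling back along $\bar\eta\to S$ shows $A_{\bar\eta}[N](\bar F)$ consists of $N^{2g}$ points each already defined over $F$, so that every torsion point of $A_\eta$ is $F$-rational and ``torsion points in $A_\eta$'' is literally $\bigcup_N A_\eta[N](F)$; and, since $S$ and $\eta$ are connected, the restriction map $A[N](S)\to A[N](F)$ is a bijection, both sides being canonically the index set of the $N^{2g}$ components. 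Hence any $P\in A_\eta(F)$ of order dividing $N$ extends uniquely to a section $S\to A[N]\subseteq A$, necessarily a torsion section restricting to $P$. Letting $N$ vary gives surjectivity (and reproves injectivity on each torsion layer).

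\textbf{The crux} is the step transferring the hypothesis --- triviality of the monodromy on the Tate module $\Tbb A_\eta$, a statement about the fibre over the generic point --- into the assertion that the $S$-schemes $A[N]$ are \emph{globally} split; this rests on the fact that $N$-torsion spreads out from $A_\eta$ to the finite \'etale $S$-group scheme $A[N]$, together with the equivalence between finite \'etale covers of the connected scheme $S$ and finite $\pi_1(S,\eta)$-sets. Once that is in hand, the lemma is a formal consequence of separatedness of $A/S$ and of $S$ being integral. (If one only assumes the weaker hypothesis that the monodromy is trivial modulo a single $N$, the same argument still yields the bijection between $N$-torsion sections and $N$-torsion points of $A_\eta$.)
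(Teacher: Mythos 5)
Your proposal is correct and follows essentially the same route as the paper: triviality of the monodromy makes each $A[N]\to S$ a constant (split) finite \'etale cover, which identifies $A[N](S)$ with $A_\eta[N](F)$ and shows all torsion of $A_{\bar\eta}$ is already $F$-rational. The extra injectivity argument via separatedness of $A/S$ is a harmless refinement of what the paper leaves implicit.
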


\begin{proof}
Then the monodromy representation of $A_\eta$ factors as $\Gal(\Fbar/F)\ra\pi_1(S,\eta)\ra\GL_\Zbhat(\Tbb A_\eta)$, hence it is also trivial, and all the torsion points in $A_\eta(\Fbar)$ are defined over $F$. For each integer $N>0$, the triviality of the monodromy representation implies that $A[N]\isom(\Zbb/N)_S^{2g}$ is a constant finite \'etale group, with $A[N](S)\isom(\Zbb/N)^{2g}$. In particular, shrinking to the \'etale open $\{\eta\}\mono S$ gives the identity $A[N](S)\ra A_\eta[N](\eta)$, which is the desired bijection, $N$ being an arbitrary integer.
\end{proof}

We also have the following elementary fact:

\begin{lemma}
Let $S$ be an integral scheme of characteristic zero, and let $A\ra S$ be an abelian $S$-scheme of dimension $g$. Let $N_n$ be a sequence of positive integers tending to infinity as $n$ grows. Then the union $\bigcup_nA[N_n]$ is dense in $A$ for the Zariski topology.
\end{lemma}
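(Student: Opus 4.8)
\emph{Plan.} The plan is to reduce the assertion to the case where the base is the field $\Cbb$ and $A$ is a single abelian variety, where it becomes transparent via the complex uniformization. First I would pass to the generic fibre. Since $A\to S$ is smooth and surjective with geometrically integral fibres, and $S$ is integral, the total space $A$ is integral, and by flatness every irreducible component of $A$ dominates $S$; hence the generic point $\xi$ of $A$ lies over the generic point $\eta$ of $S$ and is also the generic point of the generic fibre $B:=A_\eta=A\times_S\Spec F$, where $F=\kappa(\eta)$ is the function field of $S$. Let $Z\subseteq A$ be the Zariski closure of $\bigcup_nA[N_n]$; we must show $Z=A$, equivalently $\xi\in Z$. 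Because the formation of $N$-torsion commutes with base change, $Z\cap B$ is a closed subscheme of $B$ containing $B[N_n]$ for every $n$; so it is enough to prove that $\bigcup_nB[N_n]$ is Zariski dense in the abelian variety $B$ over $F$.

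\emph{Reduction to $\Cbb$.} The abelian variety $B$ is of finite type over $F$, so, spreading out its defining equations and its group law, it descends to an abelian variety $B_0$ over a subfield $F_0\subseteq F$ finitely generated over $\Qbb$; fix a field embedding $F_0\mono\Cbb$ and set $B_\Cbb:=B_0\times_{F_0}\Cbb$. If $T_0\subseteq B_0$ denotes the Zariski closure of $\bigcup_nB_0[N_n]$, then for any field extension $K/F_0$ the base change $T_0\times_{F_0}K$ is a closed subscheme of $B_0\times_{F_0}K$ containing $(B_0\times_{F_0}K)[N_n]=B_0[N_n]\times_{F_0}K$ for all $n$, and $\dim(T_0\times_{F_0}K)=\dim T_0$; hence $T_0=B_0$ if and only if the analogous closure over $K$ is everything. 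Applying this with $K=F$ and with $K=\Cbb$, it suffices to show that $\bigcup_nB_\Cbb[N_n]$ is Zariski dense in $B_\Cbb$.

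\emph{The complex torus, and the main obstacle.} Write $B_\Cbb(\Cbb)=V/\Lambda$ with $V\isom\Cbb^g$ and $\Lambda\subset V$ a lattice of rank $2g$; then $B_\Cbb[N](\Cbb)=\tfrac1N\Lambda/\Lambda$ for every $N\geq 1$. As $N\to\infty$ the lattice $\tfrac1N\Lambda$ has covering radius tending to $0$, so $\tfrac1N\Lambda/\Lambda$ becomes dense in $V/\Lambda$ for the euclidean topology; since $N_n\to\infty$, the union $\bigcup_nB_\Cbb[N_n](\Cbb)$ is euclidean-dense in $B_\Cbb(\Cbb)$, hence Zariski dense in $B_\Cbb$, because the euclidean closure of a subset of a complex variety is contained in its Zariski closure. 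Together with the two reductions this proves the lemma. No step here is deep: the only genuine content is that a uniformly scaled lattice becomes euclidean-dense, and the mildly delicate points are bookkeeping — identifying the generic point of $A$ with that of the generic fibre so as to pass legitimately from $A$ to $B$, and the elementary dimension count used to transport Zariski density between $F$ and $\Cbb$.
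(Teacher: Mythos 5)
Your route is the same as the paper's: pass to the generic fibre $A_\eta$ (which is dense in $A$), descend to an abelian variety over a field embeddable in $\Cbb$, and conclude from the euclidean density of $\tfrac1N\Lambda/\Lambda$ as $N\to\infty$. The reductions at either end are fine; the one step that is not actually justified is the biconditional in your ``Reduction to $\Cbb$''. The inclusion $\overline{\bigcup_nB_K[N_n]}\subseteq T_0\times_{F_0}K$ together with the dimension count proves only one implication: density over $K$ implies density over $F_0$. That is the direction you need for $K=\Cbb$, but for $K=F$ you need the converse (density over $F_0$ implies density over $F$), which amounts to the reverse inclusion $T_0\times_{F_0}F\subseteq\overline{\bigcup_nB_F[N_n]}$; this does not follow from what you wrote, since a priori the closure over $F$ could be a proper closed subset that is not defined over $F_0$. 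The fact is true --- all torsion points are $\bar{F}_0$-points, so the closure of the torsion descends to $\bar{F}_0$ and its formation commutes with further extension of the base field --- but the cleanest repair is to run your reduction in the opposite order: suppose the closure $T_F\subsetneq B_F$ of $\bigcup_nB_F[N_n]$ were proper; being cut out by finitely many equations, it descends to a proper closed subscheme $T_1\subsetneq B_{F_1}$ over a subfield $F_1\subseteq F$ finitely generated over $\Qbb$ (enlarged to contain a field of definition of $B$), and $T_1\supseteq B_{F_1}[N_n]$ for all $n$ because $B_{F_1}[N_n]\times_{F_1}F\subseteq T_1\times_{F_1}F$ and $B_{F_1}[N_n]\times_{F_1}F\ra B_{F_1}[N_n]$ is surjective. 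Choosing an embedding $F_1\mono\Cbb$ and base changing then contradicts your euclidean density argument. With that adjustment the proof is complete; the paper's own proof elides the same point behind the phrase ``we may further assume that it is embedded in $\Cbb$''.
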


\begin{proof}The structure map $A\ra S$ being of finite presentation, we may assume that $S$ is noetherian and geometrically integral.

If $S$ is a field, then we may further assume that it is embedded in $\Cbb$. Then   $\bigcup_nA[N_n](\Cbb)$ is dense in $A(\Cbb)$ for the analytic topology, hence $\bigcup_nA[N_n]$ is dense in $A$ for the Zariski topology.

For $S$ geometrically integral with generic point $\eta$ and function field $F$, it is clear that the abelian variety $A_\eta$ is dense in $A$ for the Zariski topology. Since $A[N_n]_\eta=A_\eta[N_n]$, we see that $\bigcup_nA[N_n]_\eta$ is dense in $A$, hence the density of $\bigcup_nA[N_n]$.
\end{proof}

\section{extension over a normal base}

In this section, we fix $S$ a normal integral scheme of characteristic zero, and we fix $A\ra S$ an abelian $S$-scheme of relative dimension $g$. Write $\eta$ for the generic point of $S$, and $\etabar$ its algebraic closure. Write $\pi_1(S,\etabar)\ra\GL_\Zbhat(\Tbb A_\etabar)$ for the monodromy representation, whose kernel corresponds to a profinite Galois cover $\Shat$ over $S$. Since $A\ra S$ is of finite presentation, we may assume that $S$ is locally noetherian.

Note that $\Shat\ra S$ is also normal, the proof of which is reduced to the finite \'etale case, using the following

\begin{lemma}\label{descent normal}
Let $A$ be a noraml integral ring, on which a finite group $G$ acts by automorphisms. Then the subring  $A^G$ fixed by $G$ is normal.
\end{lemma}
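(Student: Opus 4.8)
The plan is to reduce the statement to the characterization of normality as ``integrally closed in the fraction field'', and then argue entirely with fraction fields. Write $K := \mathrm{Frac}(A)$. Since every element of $G$ is a ring automorphism of the domain $A$, it extends uniquely to a field automorphism of $K$ by $\sigma(a/b) = \sigma(a)/\sigma(b)$, so $G$ acts on $K$; set $L := K^G$ and $B := A^G$. The first step I would record is the elementary identity $B = A \cap L$: the inclusion $B \subseteq A \cap L$ is clear, and conversely any $x \in A$ fixed by $G$ lies in $A^G = B$ by definition.

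The second step is to use that $A$ is integrally closed in $K$ in order to show that $B = A \cap L$ is integrally closed in $L$. If $x \in L$ is integral over $B$, then $x$ is a fortiori integral over $A$; as $x \in L \subseteq K$ and $A$ is integrally closed in $K$, we get $x \in A$, hence $x \in A \cap L = B$.

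The third step descends from $L$ to the fraction field of $B$. The ring $B$ is a domain, being a subring of $A$, and it contains $1$; since $L$ is a field containing $B$, we have $\mathrm{Frac}(B) \subseteq L$. Now if $x \in \mathrm{Frac}(B)$ is integral over $B$, then $x \in L$ is integral over $B$, so $x \in B$ by the second step. Thus $B$ is integrally closed in $\mathrm{Frac}(B)$, that is, $A^G$ is normal.

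There is no genuine obstacle here: this is a standard commutative-algebra fact and the argument is purely formal. The only two points that need a moment's care are that the $G$-action really extends to $K$ (this is where one uses that $A$ is a domain) and that one must not conflate $L$ with $\mathrm{Frac}(B)$ --- the proof uses only the trivial inclusion $\mathrm{Frac}(B) \subseteq L$, not the equality $\mathrm{Frac}(B) = L$ (which would follow from $A$ being integral over $B$ via the monic polynomials $\prod_{\sigma \in G}(X - \sigma a)$, but is not needed). Equivalently, steps two and three together are just the remark that the intersection of a normal domain with a subfield of its fraction field is again normal.
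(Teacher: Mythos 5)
Your proof is correct and follows essentially the same route as the paper's: take an element of the fraction field of $A^G$ integral over $A^G$, observe it is integral over $A$ and hence lies in $A$ by normality, and then use that it is $G$-fixed to conclude it lies in $A^G$. Your version is a bit more careful than the paper's (which compresses the identification $A\cap\mathrm{Frac}(A^G)=A^G$ into one clause), but the substance is identical.
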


\begin{proof}
Write $F$ for the fraction field of $A$, and $E$ the fraction field of $A^G$. Then for any $a\in E$ integral over $A^G$, its integral equation with coefficients in $A^G$ is an integral equation over $A$, hence $a\in E\cap A=A^G$.\end{proof}

The reason we choose to work over an integral normal base of characteristiz zero is the following (cf. \cite{grothendieck} Theorem and Corollary 4.2):

\begin{theorem}[A. Grothendieck]\label{grothendieck}Let $S$  be a locally noetherian integral scheme over a field of characteristic zero, with $A,B$ two abelian $S$-schemes, $\ell$ a fixed rational prime.

(1) Let $u_\ell:\Tbb_\ell A\ra\Tbb_\ell B$ a homomorphism of integral $\ell$-adic Tate modules. If for some point $s\in S$ the homomorphism $u_\ell(s)$ comes from a homomorphism of abelian $k(s)$-schemes $u(s):A(s)\ra B(s)$, then $u_\ell$ comes from some homomorphism $u:A\ra B$, i.e. it lies in the image of the natral homomorphism $\Hom_S(A,B)\ra\Hom_{\Zbb_\ell}(\Tbb_\ell A,\Tbb_\ell B)$.

(2) Assume moreover that $S$ is normal, with $U$ an open subscheme of $S$, and $X$ an abelian $U$-scheme. Then $X$ extends to an abelian $S$-scheme $\Xcal\ra S$ \ifof $\Tbb_\ell X$ is unramified over $S$, in the sense that for any $n\in\Nbb$, $X[\ell^n]$ extends to an \'etale cover of $S$.

\end{theorem}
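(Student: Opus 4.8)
Since the statement is Grothendieck's, the plan is to recall the structure of his argument; I organize it around the two parts.

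\emph{Part (1).} I would use the rigidity of abelian schemes together with the theorem of the fixed part. As $S$ is integral, hence connected, and $\Hombf_S(A,B)$ is representable by a separated scheme unramified over $S$, a homomorphism $u\colon A\to B$ is uniquely pinned down by $\Tbb_\ell(u)$, so it suffices to exhibit \emph{some} $u$ with $\Tbb_\ell(u)=u_\ell$. By a standard limit argument (the abelian schemes and their torsion are of finite presentation over $S$), resolution of singularities, and base change, I would reduce to the case that $S$ is a smooth connected complex variety --- algebraicity of $u_\ell$ descending back to the original $S$ by rigidity and descent, a point I would not belabour. Over $\Cbb$ the comparison isomorphisms identify $\Tbb_\ell A$ and $\Tbb_\ell B$ with the $\Zbb_\ell$-scalar extensions of the Betti local systems $H_1(A/S)$ and $H_1(B/S)$, which underlie polarizable variations of Hodge structure of weight $-1$. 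The global $u_\ell$ is monodromy-invariant and at $s$ is the $\ell$-adic realization of a genuine homomorphism $u(s)\colon A_s\to B_s$; since the inclusion of the integral Betti lattice into the $\ell$-adic Tate module is $\pi_1$-equivariant, $u_\ell$ already comes from a flat $\Zbb$-section $v$ of $\underline{\Hom}(H_1(A/S),H_1(B/S))$ whose value at $s$ is a morphism of Hodge structures. By Deligne's theorem of the fixed part, $\Zbb v$ is a constant sub-variation, necessarily of Hodge type $(0,0)$ everywhere; hence each $v(t)$ is a morphism of Hodge structures $H_1(A_t)\to H_1(B_t)$, and by the classical equivalence between homomorphisms of complex abelian varieties and morphisms of the weight-$(-1)$ Hodge structures on $H_1$ it comes from a homomorphism $A_t\to B_t$. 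These assemble, using that $\Hombf_S(A,B)$ is locally of finite type over $S$ and GAGA, into an algebraic $u\colon A\to B$, and $u$ induces $u_\ell$ since both are flat sections agreeing at $s$.

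\emph{Part (2).} The ``only if'' direction is immediate: if $X$ extends to an abelian $S$-scheme $\Xcal$, then $\Xcal[\ell^n]$ is a finite \'etale $S$-scheme restricting to $X[\ell^n]$ over $U$. For ``if'', I would reduce to the case $S=\Spec R$ with $R$ a discrete valuation ring. It is enough to extend $X$ across the codimension-one points of $S\setminus U$, since an abelian scheme over $\Spec\Oscr_{S,z}$ extends to one over an open neighbourhood of $z$; once $X$ is defined over an open whose complement has codimension $\geq 2$, that locus is crossed by a purity argument that uses the normality of $S$ and the hypothesis that the torsion covers extend over all of $S$, the local extensions being unique and so gluing. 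Over a discrete valuation ring the hypothesis says exactly that the inertia acts trivially on $\Tbb_\ell X_K$, so the N\'eron--Ogg--Shafarevich criterion --- valid in equal characteristic zero --- forces good reduction, i.e.\ $X_K$ extends to an abelian $R$-scheme.

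I expect the main obstacle to be the Hodge-theoretic heart of part (1): that a flat section of $\underline{\Hom}(H_1(A/S),H_1(B/S))$ which is of type $(0,0)$ at a single point is of type $(0,0)$ everywhere, and that such flat sections are exactly the homomorphisms of abelian schemes. (This can be repackaged via deformation theory: in characteristic zero the algebraic obstruction to lifting a homomorphism across an infinitesimal thickening injects into the corresponding $\ell$-adic obstruction, which vanishes because $u_\ell$ is globally defined, and then Grothendieck's existence theorem algebraizes the formal lift; but the essential input is the same.) This is precisely where characteristic zero is indispensable --- the analogous rigidity has no counterpart in positive characteristic. The reductions in part (2) are routine, but they do genuinely use the normality of $S$.
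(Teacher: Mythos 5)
This theorem is stated in the paper as an imported black box: the text cites it as ``cf.\ \cite{grothendieck} Theorem and Corollary 4.2'' and gives no proof, so there is no internal argument to compare yours against; what follows is an assessment of your reconstruction on its own terms. For part (1) you take the Hodge-theoretic route (Betti--\'etale comparison, theorem of the fixed part, Riemann's description of homomorphisms of complex abelian varieties). That is a legitimate modern proof over a smooth complex base, but it is not Grothendieck's: the theorem of the fixed part is from \cite{deligne hodge II} and postdates his 1966 paper, whose argument is infinitesimal, working over Artinian and valuation-ring bases. More importantly, the step you explicitly decline to belabour is exactly where the hypothesis ``integral but not necessarily normal'' bites. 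After passing to a resolution $\tilde S\ra S$ you obtain $\tilde u:A_{\tilde S}\ra B_{\tilde S}$, and descending it to $S$ is not formal: a homomorphism defined over a dense open of a non-normal base need not extend, and ruling this out is the content of the theorem. It can be repaired --- the image of $\tilde S$ in $\Hombf_S(A,B)$ is finite and unramified over $S$, and the fibrewise uniqueness of $u(t)$, forced by the injectivity of $\Hom(A_t,B_t)\ra\Hom_{\Zbb_\ell}(\Tbb_\ell A_t,\Tbb_\ell B_t)$ together with the fact that $u_\ell(t)$ is defined over $k(t)$, makes that image universally injective, hence a surjective closed immersion into the reduced scheme $S$ and so an isomorphism --- but as written this is a gap. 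The limit argument reducing the profinite datum $u_\ell$ to a base of finite type also needs care, since $u_\ell$ is not of finite presentation.

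In part (2) the only-if direction and the reduction of the codimension-one locus to the N\'eron--Ogg--Shafarevich criterion over a discrete valuation ring of residue characteristic zero are correct. But the remaining step --- extending an abelian scheme across a closed subset of codimension at least two in a normal, not necessarily regular, base --- is not a ``routine purity argument'': it is the substantive content of the corollary being proved (compare Faltings--Chai, Chapter I), and it genuinely uses the unramifiedness of the whole Tate module, not merely normality of $S$ and uniqueness of local extensions. So your proposal identifies the right ingredients and the right order of reductions, but the two hardest steps (descent to the non-normal base in (1), the codimension-$\geq 2$ extension in (2)) are asserted rather than proved.
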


\begin{proposition}[constant subsheaf]\label{constant subsheaf} Let $A\ra S$ be an abelian $S$-scheme, with $S$ normal integral, such that the monodromy representation $\pi_1(S,\etabar)\ra\GL_\Zbhat(\Tbb A_\etabar)$ is trivial, i.e. $S=\Shat$. Then the sheaf $\Endbf_S(A)$ is a constant subsheaf of $\Endbf_{\Zbhat_S}(\Tbb A)$.
\end{proposition}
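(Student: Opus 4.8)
The plan is to identify $\Endbf_S(A)$ with the constant sheaf $\underline{\End_S(A)}$ on the finite \'etale site of $S$. The first, essentially formal, step is to check that the Tate-module construction $u\mapsto\Tbb u$ is a \emph{monomorphism} of \'etale sheaves $\Endbf_S(A)\to\Endbf_{\Zbhat_S}(\Tbb A)$: for $U\to S$ connected finite \'etale the scheme $U$ is again integral (since $S$ is normal integral), hence so is $A_U$, so an endomorphism of $A_U$ is determined by its restriction to the generic fibre of $A_U\to U$; therefore $\End_U(A_U)$ embeds into the endomorphism ring of a geometric fibre and is in particular a finitely generated, hence $N$-adically separated, $\Zbb$-module, so an endomorphism killing every $A_U[N]$ is divisible by every $N$ and is therefore $0$. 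From now on $\Endbf_S(A)$ is regarded as a subsheaf of $\Endbf_{\Zbhat_S}(\Tbb A)$. Since $S=\Shat$, the monodromy representation $\pi_1(S,\etabar)\to\GL_\Zbhat(\Tbb A_\etabar)$ is trivial, so each $A[N]$ is a constant group $S$-scheme, $\Tbb A$ is a constant sheaf, and $\Endbf_{\Zbhat_S}(\Tbb A)$ is the constant sheaf $\underline{\End_\Zbhat(\Tbb A_\etabar)}$; it remains only to show that the subsheaf $\Endbf_S(A)$ is again constant.

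For this I would prove that for every connected finite \'etale $U\to S$ the restriction $\End_S(A)\to\End_U(A_U)$ is bijective; since a finite \'etale $S$-scheme is the coproduct of its connected components, this identifies $\Endbf_S(A)$ with $\underline{\End_S(A)}$. Injectivity is part of the first step. For surjectivity we may assume $U$ Galois over $S$ with group $G=\Gal(U/S)$: for a general connected $U$, passing to the Galois closure $U'$ yields $\End_S(A)\subset\End_U(A_U)\subset\End_{U'}(A_{U'})$, so it suffices to treat $U'$. Finite \'etale (Galois) descent of morphisms then gives $\End_S(A)=\End_U(A_U)^G$, where $G$ acts on $\End_U(A_U)$ through its action on $A_U=A\times_SU$ over $A$. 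Everything thus reduces to the claim that this $G$-action is \emph{trivial}.

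To see the claim, note that over the connected $U$ the $\Zbhat$-sheaf $\Tbb A_U=\Tbb(A\times_SU)$ is constant, so its endomorphism ring is $\End_\Zbhat(\Tbb A_\etabar)$, and by functoriality the embedding $\End_U(A_U)\hookrightarrow\End_\Zbhat(\Tbb A_\etabar)$ is $G$-equivariant. But $\Tbb A_U$ is the pullback to $U$ of the \emph{constant} sheaf $\Tbb A$ on $S=U/G$, so its canonical descent datum makes $G$ act trivially on the global sections $\Tbb A_U(U)\isom\Zbhat^{2g}$, hence trivially on $\End_\Zbhat(\Tbb A_\etabar)$ as well; an injective $G$-equivariant map whose target has trivial $G$-action has source with trivial $G$-action, so $G$ acts trivially on $\End_U(A_U)$, i.e.\ $\End_U(A_U)=\End_U(A_U)^G=\End_S(A)$. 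This triviality is the single point where the hypothesis $S=\Shat$ really enters: the conjugation action of $\pi_1(S,\etabar)$, equivalently of $G$, on $\End_\Zbhat(\Tbb A_\etabar)$ factors through the monodromy representation. It is worth stressing that the slogan ``a subsheaf of a constant sheaf is constant'' is false on a general site and is legitimate here only because the finite \'etale site of a connected scheme is a Galois category, which is exactly what the descent computation above makes explicit; Grothendieck's Theorem~\ref{grothendieck} is not needed for this statement, entering instead in the extension results that follow.
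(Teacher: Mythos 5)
Your argument is correct on the finite \'etale site and is genuinely different from the paper's. The paper proves constancy by showing that restriction to a geometric point $x$ over $\eta$ gives a bijection $\End_S(A)\ra\End_x(A_x)$: injectivity by density of torsion, surjectivity by chasing the square through $\Endbf_{\Zbhat}(\Tbb A)$ and invoking Grothendieck's Theorem \ref{grothendieck}(1) to lift a Tate-module endomorphism back to an endomorphism of $A/S$; constancy then follows by running the same argument over each connected \'etale $U\ra S$. You instead never leave the finite \'etale site: you reduce to a Galois cover $U/S$ with group $G$, use descent to write $\End_S(A)=\End_U(A_U)^G$, and kill the $G$-action by embedding $\End_U(A_U)$ equivariantly into $\End_{\Zbhat}(\Tbb A_{\etabar})$, on which $G$ acts trivially because $\Tbb A$ is already constant on $S$. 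This is more elementary and correctly isolates why ``a subsheaf of a constant sheaf is constant'' is legitimate here. The trade-off, which you should flag more carefully than ``Grothendieck is not needed,'' is that your route proves strictly less than the paper's proof does: it says nothing about the restriction map $\End_S(A)\ra\End_{\etabar}(A_{\etabar})$, since $\eta\ra S$ is not finite \'etale and Galois descent does not reach it. That surjectivity --- every endomorphism of the (geometric) generic fibre extends over the normal base $S$ --- is precisely where Theorem \ref{grothendieck}(1) is indispensable, and it is this stronger fact, not bare constancy on the finite \'etale site, that the subsequent corollary invokes when it extends $\phi\in\End(A_\eta)$ to $\Phi\in\End_S(A)$ in order to spread out an abelian subvariety of $A_\eta$. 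So your proof is a valid and cleaner proof of the literal statement, but if it were substituted for the paper's, the generic-fibre extension step would still have to be supplied separately (and would again require Grothendieck's theorem or its equivalent over a normal base).
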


\begin{proof}
$\Endbf_S(A)$ is a subsheaf of $\Endbf_{\Zbhat_S}(\Tbb A)$, because for any \'etale morphism $U\ra S$, $\End_U(A_U)$ is naturally a subset of $\End_{\Zbhat_U}(\Tbb A_U)$: if a morphism $f:A_U\ra A_U$ sends each $N$-torsion subgroup to zero, then it sends the closure of $\bigcup_NA_U[N]$ to zero, namely it is zero as an endomorphism of $A_U$  over $U$.

For the constancy, we first show that any geometric point $x$ over $\eta$ gives an isomorphism $\tau:\End_S(A)\ra\End_x(A_x)$ by restriction. The injectivity is proved as above, and for the surjectivity,  we have the commutative diagram $$\xymatrix{\End_S(A)\ar[r]\ar[d] & \End_x(A_x)\ar[d]\\ \End_{\Zbhat_S}(\Tbb A)\ar[r]& \End_\Zbhat(\Tbb A_x)}$$ where the horizontal map on the bottom is bijective due to the triviality of the monodromy representation. The two vertical maps are inclusions, hence the horizontal map $\tau$ on the top row is surjective, using \ref{grothendieck} (1).




Now for any \'etale morphism $U\ra S$ with a geometric point $x$ in $U$, the monodromy representation $\pi_1(U,x)\ra \GL_\Zbhat(\Tbb (A_U)_x)$ is trivial, hence $\End_U(A_U)\ra\End_x(A_x)$ is bijective.  Hence $\End_S(A)\ra\End_U(A_U)$ is an isomorphism for all $U$, which proves the constancy.
\end{proof}

For an abelian variety we can realize its abelian subvariety as the neutral component of the kernel of some endomorphism, based on the following:

\begin{theorem}[splitting theorem, cf. \cite{conrad} 3.19, 3.20]\label{splitting} 
Let $k$ be a  field, and let $X$ be an abelian variety over $k$. Then for any abeian subvariety $Y\subset X$, there exists an abelian subvariety $Z\subset X$ such that the product map $Y\times Z\ra X$ is an isogeny.
\end{theorem}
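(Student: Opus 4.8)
The plan is to prove this by the classical polarization argument (Poincar\'e's complete reducibility). First I would fix an ample line bundle $L$ on $X$, which exists because abelian varieties over a field are projective, and recall that the associated homomorphism $\phi_L\colon X\ra X^\vee$ to the dual abelian variety is an isogeny. Writing $\iota\colon Y\mono X$ for the inclusion, dualizing gives a surjection $\iota^\vee\colon X^\vee\epim Y^\vee$, and I set $f:=\iota^\vee\circ\phi_L\colon X\ra Y^\vee$.

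The key computation is that $f\circ\iota=\iota^\vee\circ\phi_L\circ\iota=\phi_{L|_Y}$ by functoriality of the polarization construction, and since $L|_Y$ is ample on $Y$ this restriction is again an isogeny; in particular $\Ker(f|_Y)$, and a fortiori $Y\cap\Ker f$, is finite. On the other hand $f$ is surjective, being a composite of the isogeny $\phi_L$ with the surjection $\iota^\vee$, so $\Ker f$ has dimension $\dim X-\dim Y^\vee=\dim X-\dim Y$. I would then take $Z$ to be the reduced identity component $\bigl((\Ker f)^0\bigr)_{\mathrm{red}}$ of $\Ker f$, which is an abelian subvariety of $X$ of dimension $\dim X-\dim Y$.

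Finally, consider the multiplication map $m\colon Y\times Z\ra X$, $(y,z)\mapsto y+z$. Its kernel is $\{(y,-y):y\in Y\cap Z\}\isom Y\cap Z$, which is finite by the previous step; and $\dim(Y\times Z)=\dim Y+\dim Z=\dim X$, so the image of $m$ is a closed irreducible subvariety of $X$ of full dimension, hence all of $X$. Thus $m$ is a surjective homomorphism of abelian varieties of equal dimension with finite kernel, i.e.\ an isogeny, as desired.

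The step I expect to be the only genuinely delicate one is checking that the reduced identity component of $\Ker f$ really is an abelian subvariety: this is automatic over a perfect field, and in particular in the characteristic-zero setting relevant to this paper, but over an imperfect field one must argue a little more carefully (or pass to the perfect closure and descend). Everything else --- existence and basic properties of polarizations, ampleness of the restriction of an ample bundle to a subvariety, and functoriality of $\phi_L$ --- is entirely standard, which is why the statement can simply be cited from \cite{conrad}.
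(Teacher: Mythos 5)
Your argument is correct and is exactly the classical polarization proof of Poincar\'e complete reducibility; the paper itself gives no proof of this statement but simply cites \cite{conrad} 3.19--3.20, where essentially this same argument (ample $L$, the map $\iota^\vee\circ\phi_L$, and a complement inside its kernel) is carried out. Your flag about the reduced identity component over imperfect fields is the right one to raise, and it is harmless here since the paper only invokes the theorem over fields of characteristic zero.
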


In fact let $Y\subset X$ be an abelian subvariety, with $N$ the degree of the isogeny $Y\times Z\ra X$ given by the theorem.  The multiplication $[N]:Y\times Z\ra Y\times Z$ factors through some isogeny $(p_Y,p_Z):X\ra Y\times Z$, and the composition $$X\ot{(p_Y,p_Z)}\lra Y\times Z\ot{i_Y,i_Z}\lra X\times X\ot{m_X}\ra X$$ is an isogeny, with $i_Y$ and $i_Z$ inclusions of abelian subvarieties. In particular, the composition $\psi:=m_X\circ(0\times i_Z)\circ p_Z\in\End_k(X)$ is an endomorphism, whose kernel contains $Y$ as the neutral component.

\begin{corollary} Let $A\ra S$ etc. be as in the beginning of this section, with $S$ normal integral. If the monodromy representation is trivial, then every abelian subvariety $A'$ in the generic fiber $A_\eta$ extends to an abelian $S$-subscheme $B$ of $A\ra S$ with $B_\eta=A'$.
\end{corollary}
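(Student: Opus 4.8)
The plan is to realise $A'$ first over a dense open of $S$ as the identity component of the kernel of an endomorphism, then to extend this abelian subscheme over all of $S$ by Grothendieck's extension theorem \ref{grothendieck}(2), and finally to recover its embedding into $A$ from \ref{grothendieck}(1). The place where the triviality of the monodromy is decisive is exactly the hypothesis of \ref{grothendieck}(2): triviality forces the torsion of $A\ra S$ to be constant, hence also the torsion of any subobject, which is precisely what is needed to apply the extension theorem.

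First I would apply the splitting theorem \ref{splitting} to $X=A_\eta$ and $Y=A'$, together with the construction of $\psi$ recalled just after it, to obtain $\psi\in\End_\eta(A_\eta)$ whose kernel has $A'$ as its identity component. Since the monodromy is trivial, Proposition \ref{constant subsheaf} says the restriction $\End_S(A)\ra\End_\eta(A_\eta)$ is bijective, so $\psi$ extends to $\Psi\in\End_S(A)$. By generic flatness there is a dense open $U\subseteq S$ over which $\Ker\Psi$ is $S$-flat; as $S$ has characteristic zero it is then $S$-smooth, its relative identity component is an open subgroup scheme, and after shrinking $U$ so that the component group is finite \'etale over $U$ that identity component $\Acal'$ is also closed in $\Ker\Psi|_U$, hence proper over $U$. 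Thus $\Acal'\ra U$ is an abelian $U$-subscheme of $A_U$ with $(\Acal')_\eta=A'$. (Alternatively one could spread $A'$ out directly; using the two previous results is just the most economical route.)

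Next I would check the Tate-module hypothesis of \ref{grothendieck}(2). For a prime $\ell$ and $n\ge 1$, $\Acal'[\ell^n]$ is a closed subgroup scheme of $A_U[\ell^n]$; triviality of the monodromy of $A\ra S$ gives $A[\ell^n]\isom(\Zbb/\ell^n)^{2g}_S$ constant over $S$, so $A_U[\ell^n]$ is constant over $U$, and a closed subgroup scheme of a constant finite group scheme over the connected base $U$ is again constant. Hence $\Acal'[\ell^n]$ extends to the corresponding constant, in particular \'etale, group scheme over $S$, i.e. $\Tbb_\ell\Acal'$ is unramified over $S$; by \ref{grothendieck}(2), applied with the normal $S$, the open $U$ and the abelian $U$-scheme $\Acal'$, there is an abelian $S$-scheme $\Xcal\ra S$ extending $\Acal'$. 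To embed $\Xcal$ back into $A$: the inclusion $\Tbb_\ell\Acal'\mono\Tbb_\ell A_U$ is a morphism of lisse $\Zbb_\ell$-sheaves on the dense open $U$ of the normal connected $S$, hence extends uniquely to $u_\ell\colon\Tbb_\ell\Xcal\ra\Tbb_\ell A$ over $S$; at $\eta$ it is induced by $A'\mono A_\eta$, so by \ref{grothendieck}(1) it is induced by a homomorphism $\iota\colon\Xcal\ra A$ of abelian $S$-schemes. For every $s\in S$ the map $\Tbb_\ell(\iota_s)=(u_\ell)_s$ is injective with torsion-free cokernel — both being properties of a single $\pi_1(S)$-module map which already hold over $U$ — so $\Ker(\iota_s)=0$ and $\iota_s$ is a closed immersion; since $\Xcal$ is $S$-flat and proper and $\iota$ is of finite presentation, the fibrewise criterion for immersions shows $\iota$ is a closed immersion, and $B:=\iota(\Xcal)$ is the desired abelian $S$-subscheme with $B_\eta=A'$.

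The step I expect to be the real obstacle is the passage from the dense open $U$ to all of $S$, i.e. the application of \ref{grothendieck}(2): verifying its hypothesis on the Tate module of the subobject is exactly where normality of $S$ and triviality of the monodromy of $A$ are both used. The secondary delicate point is recovering the closed immersion $B\mono A$ rather than just an abstract abelian $S$-scheme $\Xcal$; this is handled by extending the inclusion of Tate modules over $S$, invoking \ref{grothendieck}(1), and then verifying fibrewise injectivity so that the fibrewise immersion criterion applies.
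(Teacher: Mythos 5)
Your first two steps coincide exactly with the paper's: apply the splitting theorem \ref{splitting} to realize $A'$ as the neutral component of $\Ker\psi$ for some $\psi\in\End_\eta(A_\eta)$, then use the constancy of $\Endbf_S(A)$ (Proposition \ref{constant subsheaf}) to extend $\psi$ to $\Psi\in\End_S(A)$. After that the two arguments genuinely diverge. The paper works with $\Ker\Psi$ over all of $S$ at once: it asserts that $\Ker\Psi$ is smooth over $S$ and takes its neutral component directly, with no spreading-out and no appeal to \ref{grothendieck}(2). You instead only extract the abelian subscheme over a dense open $U$ (via generic flatness, Cartier in characteristic zero, and the relative identity component), then extend it to an abelian $S$-scheme $\Xcal$ by Grothendieck's unramifiedness criterion \ref{grothendieck}(2) --- whose hypothesis you verify using exactly the triviality of the monodromy --- and finally re-embed $\Xcal$ into $A$ via \ref{grothendieck}(1) and a fibrewise immersion criterion. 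Your route is longer, but it sidesteps the one delicate point in the paper's argument, namely why $\Ker\Psi$ should be flat (hence smooth) over the \emph{whole} of the merely normal base $S$; the paper's stated justification (``it is the pull-back of the neutral section along $\Psi$'') only yields that $\Ker\Psi$ is a closed subgroup scheme, so your more cautious spreading-out-plus-extension argument is a legitimate, self-contained alternative. The price you pay is the extra work of recovering the closed immersion $\Xcal\mono A$, and there is one small slip there: injectivity of $\Tbb_\ell(\iota_s)$ with torsion-free cokernel for a \emph{single} prime $\ell$ only kills the $\ell$-primary part of $\Ker(\iota_s)$; you should run that last step for every prime (or with the full adelic Tate module $\Tbb A$, for which the monodromy is likewise trivial) to conclude $\Ker(\iota_s)=0$. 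This is immediate to fix and does not affect the structure of the argument.
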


\begin{proof}

Let $A'\subset A_\eta$ be an abelian subvariety. Then by \ref{splitting} we can find some endomorphism $\phi:A_\eta\ra A_\eta$ such that $A'$ is equal to the neutral component of the closed subgroup variety $\Ker\phi$. The constancy of $\Endbf_S(A)$ shows that $\phi$ extends to a unique endomorphism $\Phi$ of $A\ra S$. The kernel $\Ker\Phi$ is a closed $S$-subgroup of $A\ra S$, and it is smooth over $S$ because it is the pull-back of the neutral section $S\mono A$ along $\Phi$. Therefore the neutral component of $\Ker\Phi$, denoted as $B$, is a closed $S$-subscheme of $A$ and is an abelian $S$-subscheme under the group law of $A\ra S$. Taking generic fiber we see that $B_\eta$ is a connected subgroup variety of $\Ker\Phi_\eta=\Ker\phi$, namely it is equal to $A'$.
\end{proof}

We also have the following 

\begin{corollary}[descent to finite level]\label{descent to finite level}
Let $A\ra S$ be an abelian $S$-scheme, with $S$ normal integral of characteristic zero. Let $\Shat\ra S$ be the  pro-finite \'etale Galois covering corresponding to the kernel of the monodromy representation, and let $A'\ra \Shat$ be an abelian  $\Shat$-subscheme of $\Ahat:=A\times_S\Shat$. Then $A'$ descends to some finite \'etale cover $T\ra S$, i.e. there exists a finite \'etale cover $T\ra S$ such that $\Shat\ra S$ factors as $\Shat\ra T\ra S$ and that $A'=B_{\Shat}$ where $B$ is an abelian $T$-subscheme of the base change $A_T\ra T$.
\end{corollary}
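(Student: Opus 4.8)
The plan is to descend $A'$ directly to a finite \'etale cover by the standard inverse-limit formalism for schemes, so that for this step Grothendieck's theorem \ref{grothendieck} is not even needed. Write $H$ for the kernel of the monodromy representation $\pi_1(S,\etabar)\ra\GL_\Zbhat(\Tbb A_\etabar)$, so that $\Shat=\varprojlim_U\tilde S_U$, where $U$ runs over the open subgroups of $\pi_1(S,\etabar)$ containing $H$ and $\tilde S_U\ra S$ is the associated finite \'etale subcover, through which $\Shat\ra S$ factors; the transition morphisms $\tilde S_{U'}\ra\tilde S_U$ (for $U'\subseteq U$) are finite, hence affine, and $\Ahat=\varprojlim_UA_{\tilde S_U}$ with $A_{\tilde S_U}:=A\times_S\tilde S_U$. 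Since each $\tilde S_U$ is locally noetherian and an abelian scheme is of finite presentation over its base, $A'\mono\Ahat$ is a finitely presented closed immersion.

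First I would apply the limit theorems of EGA IV, \S8: a finitely presented closed subscheme of $\varprojlim_UA_{\tilde S_U}$ is pulled back from a finitely presented closed subscheme at a finite level, so there are an index $U_0$ and a finitely presented closed subscheme $B\subset A_{\tilde S_{U_0}}$ with $A'=B\times_{\tilde S_{U_0}}\Shat$. Next I would propagate the remaining structure of $A'$ down the system: smoothness of $A'\ra\Shat$, the condition that the difference morphism $A_{\tilde S_{U_0}}\times_{\tilde S_{U_0}}A_{\tilde S_{U_0}}\ra A_{\tilde S_{U_0}}$ sends $B\times B$ into $B$ (equivalently, that $B$ is a closed subgroup scheme), and the fact that the neutral section factors through $B$, all hold after base change to $\Shat$ and are preserved on passing to a finer cover in the system. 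Hence, after replacing $U_0$ by a smaller $U_1$, the cover $T:=\tilde S_{U_1}$ and the subscheme $B_1:=B\times_{\tilde S_{U_0}}\tilde S_{U_1}$ satisfy: $B_1$ is a smooth closed $S$-subgroup of $A_T\ra T$, i.e. an abelian $T$-subscheme in the sense of \ref{abelian schemes and endomorphisms}, and $A'=(B_1)_{\Shat}$. Since $\Shat\ra S$ factors through $T\ra S$ by construction, this is exactly the assertion.

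A variant closer in spirit to the preceding results runs the kernel construction instead: the generic fibre $A'_{\hat\eta}\subset\Ahat_{\hat\eta}$ is, by the same limit argument, pulled back from an abelian subvariety of $(A_T)_\eta$ over the function field of some finite level $T$; by \ref{splitting} that abelian subvariety is the neutral component of the kernel of some $\phi\in\End((A_T)_\eta)$, which extends to $\Phi\in\End_T(A_T)$ by \ref{grothendieck}(1) applied to the $\pi_1(T)$-equivariant Tate-module homomorphism induced by $\phi$; then the neutral component of $\Ker\Phi$ is an abelian $T$-subscheme $B$, and $B_{\Shat}=A'$ because both are flat closed subschemes of $\Ahat$ over the integral scheme $\Shat$ with the same generic fibre, hence both coincide with the schematic closure of that fibre. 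In either approach the only point needing care is the bookkeeping between the non-noetherian cover $\Shat$ and its finite levels --- the limit descent of ``being a smooth closed subgroup scheme'' in the first approach, the base-change compatibility of the neutral-component-of-kernel construction in the second --- and there is no real obstacle beyond a correct use of the EGA IV limit theorems.
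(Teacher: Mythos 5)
Your proposal is correct, and your first argument takes a genuinely different (and in fact more economical) route than the paper. The paper's own proof is your second variant in a slightly different order: it first invokes the preceding results over $\Shat$ (the splitting theorem \ref{splitting} together with the constancy of the endomorphism sheaf, Proposition \ref{constant subsheaf}) to write $A'$ as the neutral component of $\Ker\phi$ for a single endomorphism $\phi$ of $\Ahat$, and then descends $\phi$ itself to an endomorphism $\Phi$ of $A_N\ra S_N$ at a finite level of the filtered system, taking $B$ to be the neutral component of $\Ker\Phi$; your version does the same bookkeeping at the generic fibre and re-extends with \ref{grothendieck}(1), which amounts to the same thing, and your observation that two flat closed subschemes of $\Ahat$ over the integral base $\Shat$ with the same generic fibre coincide (both being the schematic closure of that fibre) makes explicit a point the paper leaves unsaid. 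Your first approach, by contrast, descends the closed subscheme $A'$ directly via the limit theorems of EGA IV, \S 8: since the transition maps are finite (hence affine) and the finite levels may be taken noetherian (as $A\ra S$ is of finite presentation), a finitely presented closed immersion into $\Ahat=\limproj A_{\tilde S_U}$, together with smoothness and the factorization of the multiplication and neutral-section morphisms through it, all descend to some finite level. This buys you a proof that never realizes $A'$ as a kernel and so does not use \ref{splitting}, \ref{grothendieck}, or \ref{constant subsheaf} at all; the only care needed, as you say, is the qcqs/affine-transition hypotheses of the limit formalism and, if one wants $B$ proper with connected fibres rather than merely a smooth closed subgroup as in Definition \ref{abelian schemes and endomorphisms}(3), the descent of those properties, which the same theorems supply. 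Either route establishes the corollary.
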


\begin{proof}
This is the standard reduction of projective limits: $A'\subset\Ahat$ is the neutral component of $\Ker\phi$ for some endomorphism $\phi:\Ahat\ra\Ahat$. Since the projective limit $\Shat=\limproj S_N$ is taken over the filtrant system $(S_N)$ with $S_N$ corresponding to the kernel of $\pi_1(S,\etabar)\ra\GL_{\Zbb/N}(A[N]_\etabar)$, there exists some integer $N>0$ such that $\phi:\Ahat\ra\Ahat$ is pulled-back from some endomorphism $\Phi:A_N\ra A_N$ with $A_N=A\times_SS_N$, and that $\Ker\Phi$ has neutral component $B$ such that $B$ is an abelian $S_N$-subscheme with $B_{\Shat}=A'$. One may thus take $T=S_N$.
\end{proof}

We proceed to prove the Manin-Mumford conjecture in the relative setting using special subschemes.

\begin{proposition}[relative Manin-Mumford]\label{relative Manin-Mumford}Let $A\ra S$ be an abelian $S$-scheme, with $S$ a normal integral scheme of characteristic zero. Let $A_n$ be a sequence of special subschemes of $A\ra S$. Then the Zariski closure of $\bigcup_nA_n$ can be represented as a finite union of special subschemes.
\end{proposition}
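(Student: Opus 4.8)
The plan is to reduce to the classical Manin-Mumford conjecture (Conjecture \ref{manin mumford}) over the function field of $S$, after first passing to the profinite cover $\Shat\to S$ where the monodromy is trivial. First I would base change everything to $\Shat$: by Corollary \ref{descent to finite level} each special subscheme $A_n$ becomes, after this base change, of the form $A'_n + t_n$ with $A'_n$ an abelian $\Shat$-subscheme of $\Ahat$ and $t_n$ a torsion section; since Zariski closures are compatible with the pro-finite \'etale (hence open, faithfully flat) morphism $\Ahat\to A$, it suffices to prove the statement for the sequence $(A'_n + t_n)$ in $\Ahat\to\Shat$, provided we can descend the answer back down — but the whole configuration is stable under the (profinite) Galois group, so the Zariski closure upstairs will be Galois-stable, and its image downstairs will again be a finite union of special subschemes. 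So I reduce to the case $S = \Shat$, i.e. trivial monodromy.

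Next, with trivial monodromy, I pass to the generic fiber. By Lemma \ref{geneic fiber} torsion sections of $A\to S$ correspond bijectively to torsion points of $A_\eta$, and by the Corollary following Theorem \ref{splitting} every abelian subvariety of $A_\eta$ extends uniquely to an abelian $S$-subscheme of $A\to S$ (using constancy of $\Endbf_S(A)$, Proposition \ref{constant subsheaf}). Hence the generic fibers $(A'_n + t_n)_\eta = (A'_n)_\eta + (t_n)_\eta$ form a sequence of torsion subvarieties of the abelian variety $A_\eta$ over the characteristic-zero field $F = k(\eta)$; after choosing an embedding $\Fbar\hookrightarrow\Cbb$ I apply Conjecture \ref{manin mumford} (which is a theorem) to conclude that the Zariski closure in $A_\eta$ of $\bigcup_n (A'_n + t_n)_\eta$ is a finite union $\bigcup_{j=1}^r (B_j + s_j)$ of torsion subvarieties. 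By a standard Galois descent / field-of-definition argument (the closure is $\Gal(\Fbar/F)$-stable, and with trivial monodromy each torsion point and each abelian subvariety is already defined over $F$), this finite union is defined over $F$ itself.

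Finally I spread out: each $B_j$ extends to an abelian $S$-subscheme $\Bcal_j$ of $A\to S$ and each torsion point $s_j$ extends to a torsion section $\sigma_j$, so $\Bcal_j + \sigma_j$ is a special subscheme of $A\to S$ whose generic fiber is $B_j + s_j$. Let $W$ be the Zariski closure of $\bigcup_n A_n$ in $A$ and let $W' = \bigcup_j (\Bcal_j+\sigma_j)$. Both are closed, both dominate $S$, and they have the same generic fiber over $\eta$; since $A\to S$ is separated and $S$ is integral, a closed subscheme dominating $S$ is determined by its generic fiber (its closure equals the closure of the generic fiber), so $W = W'$, a finite union of special subschemes. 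Undoing the base change $\Shat\to S$ as in the first paragraph finishes the proof.

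The main obstacle I anticipate is the bookkeeping in the first and last paragraphs: making the descent from $\Shat$ to $S$ precise — showing that the Zariski closure upstairs is a finite union of $B_j + t_j$'s permuted by the Galois action, that the images under $\Ahat\to A$ are exactly special subschemes in the sense of Definition \ref{special sections and special subschemes}, and that finitely many special subschemes suffice downstairs (this is where one uses that the $A'_n$ range over abelian subschemes of the fixed abelian variety $A_\eta$, of which there are only countably many up to the action, and that the closure stabilizes after finitely many $n$). The passage to $\Cbb$ and the invocation of classical Manin-Mumford, as well as the spreading-out, are routine once the monodromy has been trivialized.
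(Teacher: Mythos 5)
Your proposal is correct and follows essentially the same route as the paper's proof: trivialize the monodromy by passing to the pro-finite cover $\Shat\ra S$, reduce to the generic fiber via the bijections between torsion sections and torsion points (Lemma \ref{geneic fiber}) and between abelian $S$-subschemes and abelian subvarieties of $A_\eta$, apply the classical Manin--Mumford theorem over $k(\eta)$, and recover the closure downstairs using that $\Ahat\ra A$ is universally closed. The small differences are cosmetic: your Galois-stability and stabilization worries are unnecessary, since images of special subschemes under $A_N\ra A$ are special subschemes by definition and the finiteness is already supplied by the Manin--Mumford output on the generic fiber.
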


\begin{proof}
Since $A\ra S$ is of finite presentation, we may assume for simplicity that $S$ is geometrically integral of generic point $\eta$, with $\etabar$ the geometric point in $S$ corresponding to the spearable closure of $\eta$.

(1) We first consider the case when the monodromy representation is trivial, i.e. $\Shat=S$. In this case, we have proved that taking base change from $S$ to $\eta$ gives \begin{itemize}

\item a bijection between torsion sections of $A\ra S$ and torsion points of $A_\eta$;

\item a bijection between abelian $S$-subschemes of $A\ra S$ and abelian subvarieties of $A_\eta$.

\end{itemize} 
And taking Zariski closure gives inverses to these bijections because $\eta$ is dense in $S$.

Special subschemes in $A$ are of the form $a+B$ with $a$ a torsion section and $B$ an abelian $S$-subscheme. Let $A_n$ be a sequence of special subschemes of $A\ra S$. Then $A_{n,\eta}$ is a torsion subvariety of $A_\eta$, with $A_{n,\eta}$ dense in $A_n$ for the Zariski closure topology. The closure of $\bigcup_nA_{n,\eta}$ in $A_\eta$ is a finite union of torsion subvarieties, whose closure is a finite union of special subscheme in $A$. The Manin-Mumford conjecture is thus immediate in this case.

(2) In general, a special subscheme of $A$ is the image $\pi'(A')$ where 

\begin{itemize}
\item  $\pi':A_N\ra A$ is the projection for some base change $A_N\ra S_N$, $S_N$ corresponding to the kernel of $\pi_1(S,\etabar)\ra\GL_{\Zbb/N}(A[N]_\etabar)$.;
\item $A'=a'+B'$ for some torsion section $a'$ of $A_N\ra S_N$ and some abelian $S_N$-subscheme $B'$ of $A_N$.
\end{itemize}
Hence a special subscheme is of the form $\pi(A')$, where $\pi:\Ahat\ra A$ is the projection for the base change $\Ahat\ra \Shat$, with $\Shat$ corresponding to the kernel of $\pi_1(S,\etabar)\ra\GL_{\Zbhat}(\Tbb A_\etabar)$, and $A'=a'+B'$ for some torsion section $a'$ of $\Ahat\ra \Shat$ and $B'$ some abelian $\Shat$-subscheme. 

The projection $\pi:\Ahat\ra A$ is a pro-finite cover, and in particular it is universally closed. Let $A_n$ be a special subscheme in $A$ of the form $\pi(B_n)$ with $B_n$ a special subscheme of $\Ahat\ra \Shat$. Then the Zariski closure of $\bigcup_nA_n$ contains $\pi(B)$, with $B$ the Zariski closure of $\bigcup_nB_n$ in $\Ahat$. By (1) we know that $B$ is a finite union of special subschemes in $\Ahat$, hence $\pi(B)$ is a finite union of special subschemes in $A$, hence it is equal to the Zariski closure of $\bigcup_nA_n$.\end{proof}

\section{preliminaries on Kuga varieties}

 We recall briefly the definitions of Kuga data, Kuga varieties, and their special subvarieties, cf. \cite{chen kuga} Section 2.
 
 \begin{definition}[Kuga data]\label{kuga data}
 A Kuga datum is a pair $(\Pbf,Y)$ given by some $(\Gbf,X;\Vbf)$ as follows
 
 \begin{itemize}
 \item $(\Gbf,X)$ is a pure Shimura datum in the sense of \cite{deligne pspm}; in particular, $X$ is a $\Gbf(\Rbb)$-conjugacy class of homomorphisms $x:\Sbb\ra\Gbf_\Rbb$ subject to some algebraic constraints;
 
 \item $\rho:\Gbf\ra\GL_\Vbf$ is an  algebraic representation on a finite-dimensional $\Qbb$-vector space such that for any $x\in X$ the composition $\rho\circ x:\Sbb\ra \GL_{\Vbf,\Rbb}$ is a Hodge structure of type $\{(-1,0),(0,-1)\}$.
 
 \end{itemize}
 
 We put $\Pbf=\Vbf\rtimes\Gbf$ and $Y=\Vbf(\Rbb)\times X$, with $Y$ viewed as a $\Pbf(\Rbb)$-conjugacy class of homomorphisms $y:\Sbb\ra\Pbf_\Rbb$ subject to some algebraic constraints. In the language of \cite{chen bounded}, $(\Pbf,Y)=\Vbf\rtimes(\Gbf,X)$ is fibred over $(\Gbf,X)$.
 
 When $\Vbf=0$, we get (pure) Shimura data.
 
 For simplicity, we also require that the Kuga data are irreducible in the sense of \cite{pink thesis} 2.13, which means that for any $\Qbb$-subgroup $\Hbf\subsetneq\Gbf$ there is some $x\in X$ such that $x(\Sbb)\nsubseteq\Hbf_\Rbb$.

 \end{definition}
 
 \begin{definition}[morphisms and subdata]\label{morphisms and subdata}
 
  A morphism between Kuga data is of the form $(f,f_*):(\Pbf,Y)\ra(\Pbf',Y')$ with $f:\Pbf\ra\Pbf'$ a homomorphism of $\Qbb$-groups, and $f_*:Y\ra Y'$ is the push-forward sending $y:\Sbb\ra\Pbf_\Rbb$ to $f\circ y:\Sbb\ra\Pbf'_\Rbb$.

 A subdatum of $(\Pbf,Y)$ is a morphism of Kuga data $(f,f_*):(\Pbf_1,Y_1)\ra(\Pbf,Y)$ such that both $f$ and $f_*$ are inclusions of subsets.
 
 Let $(\Pbf,Y)=\Vbf\rtimes(\Gbf,X)$ be a Kuga datum. The natural map $(\Pbf,Y)\ra(\Gbf,X)$ is a morphism of Kuga data, which we call the natural projection of $(\Pbf,Y)$ onto its pure base: $\Gbf$ is the maximal reductive quotient of $\Pbf$. The Levi decomposition $\Pbf=\Vbf\rtimes\Gbf$ also extends to an inclusion of subdatum $(\Gbf,X)\mono(\Pbf,Y)$ which we call the pure section corresponding to $\Pbf=\Vbf\rtimes\Gbf$.
 
 Note that for a Kuga datum $(\Pbf,Y)=\Vbf\rtimes(\Gbf,X)$, $Y$ is a complex manifold with a transitive action of $\Pbf(\Rbb)$, and the natural projection $Y\ra X$ is a holomorphic vector bundle, equivariant \wrt $\Pbf(\Rbb)\ra\Gbf(\Rbb)$. The fiber $\pi^\inv x$ is the real vector space $\Vbf(\Rbb)$ with the complex structure defined by $x:\Sbb\ra\Gbf_\Rbb\ra\GL_{\Vbf,\Rbb}$.

 \end{definition}
 
 \begin{definition}[connected Kuga varieties]\label{connected Kuga varieties} We write $\adele=\Zbhat\otimes_\Zbb\Qbb$ for the ring of finite adeles.
 
 (1) The (complex) Kuga variety defined by the Kuga datum $(\Pbf,Y)$ at level $K$ for some \cosg $K\subset\Pbf(\adele)$ is a double quotient of the form $$M_K(\Pbf,Y)(\Cbb)=\Pbf(\Qbb)\bsh (Y\times\Pbf(\adele)/K)$$ with $\Pbf(\Qbb)$ acts on $Y\times\Pbf(\adele)/K$ through the diagonal. Take $\Pbf(\Qbb)_+$ the stablizer in $\Pbf(\Qbb)$ of some connected component $Y^+\subset Y$, we have $$M_K(\Pbf,Y)(\Cbb)=\coprod_a\Gamma_K(a)\bsh Y^+$$ with $\Gamma_K(a)=\Pbf(\Qbb)_+\cap aKa^\inv$, $a$ running through a set of representatives of the finite double quotient $\Pbf(\Qbb)_+\bsh\Pbf(\adele)/K$. 
 
 The general theory of mixed Shimura varieties in \cite{pink thesis} shows that the set $M_K(\Pbf,Y)(\Cbb)$ defined above are quasi-projective normal varieties over $\Cbb$, and they admits canonical models over certain number fields.  In this paper we only treat them as normal algebraic varieties over $\Cbb$.
 
 The map $\wp_\Pbf:Y\times\Pbf(\adele)/K\ra M_K(\Pbf,Y)(\Cbb), \ (y,aK)\mapsto[y,aK]$ is called the (complex) uniformization.
 
 (2) A connected Kuga datum is of the form $(\Pbf,Y;Y^+)$ with $(\Pbf,Y)=\Vbf\rtimes(\Gbf,X)$ a Kuga datum and $Y^+\subset Y$ a connected component of $Y$. Note that $Y^+$ is homogeneous under $\Pbf(\Rbb)^+$. We also have $(\Pbf,Y;Y^+)=\Vbf\rtimes(\Gbf,X;X^+)$ in the sense of \ref{kuga data}, with $X^+$ the image of $Y^+$ in $X$ which is a connected component of $X$.
 
 Connected Kuga varieties are quasi-projective algebraic varieties over $\Cbb$ of the form $\Gamma\bsh Y^+$ with $\Gamma\subset\Pbf(\Qbb)_+$ some congruence subgroup. They also admit canonical models over some number fields.
 
 We write $\wp_\Gamma$ for  the uniformization map $Y^+\mapsto \Gamma\bsh Y^+, y\mapsto \Gamma y$.
 
 (3) In particular, when we write $(\Pbf,Y;Y^+)=\Vbf\rtimes(\Gbf,X;X^+)$ and take a congruence subgroup of the form $\Gamma=\Gamma_\Vbf\rtimes\Gamma_\Gbf$, with $\Gamma_\Vbf\subset\Vbf(\Qbb)$ and $\Gamma_\Gbf\subset\Gbf(\Qbb)_+$ congruence subgroups such that $\Gamma_\Vbf$ is stabilized by $\Gamma_\Gbf$, then we have  the natural projection  $\pi:M=\Gamma\bsh Y^+\ra S=\Gamma_\Gbf\bsh X^+$, which is an abelian $S$-scheme with neutral section $S\mono M$ given by $(\Gbf,X;X^+)\mono(\Pbf,Y;Y^+)$.
  \end{definition}
  
  
  
  
  \begin{assumption}\label{assumption}
 Unless otherwise mentioned, we will always assume that $\Gamma_\Gbf$ is a torsion-free congruence subgroup of $\Gbf(\Qbb)_+$. In this case $S$ is smooth, and the natural map $\Gamma_\Gbf'\bsh X^+\ra\Gamma_\Gbf\bsh X^+$ is finite \'etale for any congruence subgroup $\Gamma_\Gbf'\subset\Gamma_\Gbf$. Sicne $S$ is also normal by \cite{baily borel}, we see that the \'etale fundamental group of $S$ is equal to the pro-finite completion of $\Gamma$, the image of $\Gamma_\Gbf$ inside $\Aut(X^+)\isom\Gbf^\ad(\Rbb)^+$, which only differs from $\Gamma_\Gbf$ by a central subgroup.
 
\end{assumption}

\begin{remark}[group law]\label{group law} Let $(\Pbf,Y)=\Vbf\rtimes(\Gbf,X)$ be a fibred Kuga datum.  We write the group law on $\Pbf=\Vbf\rtimes\Gbf$ as $$(v,g)\cdot(v',g')=(v+g(v'),gg')$$ for local sections $v,v'\in\Vbf$, $g,g'\in\Gbf$, with $g(v')=gv'g^\inv=\rho(g)(v')$ by the representation $\rho:\Gbf\ra\GL_\Vbf$. In particular, for $u\in\Vbf$, we have $u(v,g)u^\inv=(u,1)(v,g)(-u,1)=(v+u-g(u),g)$. 

Write $\pi:(\Pbf,Y)\ra(\Gbf,X)$ for the natural projection, then the fibred product $(\Pbf,Y)\times_{(\Gbf,X)}(\Pbf,Y)$ exists as a fibred Kuga datum, which is simply $(\Vbf\oplus\Vbf)\rtimes(\Gbf,X)$. The sum $\Vbf\oplus\Vbf\ra\Vbf$ defines a group law $(\Pbf,Y)\times_{(\Gbf,X)}(\Pbf,Y)\ra(\Pbf,Y)$ with $(\Gbf,X)\ra(\Pbf,Y)$ as the neutral section. On $\Pbf=\Vbf\rtimes\Gbf$ it writes as $(v,g)+(v',g)=(v+v',g)$ and on $Y$ it writes as $(v,x)+(v',x)=(v+v',x)$. Fix a connected component $X^+\subset X$, its pre-image $Y^+=\pi^\inv X^+\subset Y$, and congruence subgroups $\Gamma_\Gbf\subset\Gbf(\Qbb)_+$, $\Gamma_\Vbf\subset\Vbf(\Qbb)$ (stabilized by $\Gamma_\Gbf$) and $\Gamma=\Gamma_\Vbf\rtimes\Gamma_\Gbf$, we see that $M=\Gamma\bsh Y^+\ra S=\Gamma_\Gbf\bsh X^+$ is a bundle of compact Lie group over $S$: $(\vbar,\xbar)+(\vbar',\xbar)=(\overline{v+v'},\xbar)$ for $(v,x),(v',x)\in\pi^\inv x$, $x\in X^+$. 

The fibers are compact complex tori, and $M\ra S$ is an abelian $S$-scheme as the variation of Hodge structures given by the monodromy representation $\pi_1(S)\ra\GL_{\Gamma_\Vbf}$ is polarized, due to the universal property of $(\Gbf,X)$ mentioned later in \ref{moduli of Hodge structures}; see also \cite{deligne pspm}, \cite{pink thesis} and \cite{pink combination}.

\end{remark}
 
 \begin{definition}[special subvarieties and Hecke translates]\label{special subvarieties and Hecke translates} For $M=\Gamma\bsh Y^+$ a connected Kuga variety defined by $(\Pbf,Y;Y^+)$ as above, a special subvariety in $M$ is of the form $\wp_\Gamma(Y'^+)$ given by some subdatum $(\Pbf',Y';Y'^+)\subset(\Pbf,Y;Y^+)$. Note that we require $Y'^+$ to be a connected component of $Y'$ contained in $Y^+$.
 
 Take $q\in\Pbf(\Qbb)_+$, $q\Gamma q^\inv$ remains a congruence subgroup of $\Pbf(\Qbb)_+$, and we have an isomorphism $\tau_q:M=\Gamma\bsh Y^+\ra q\Gamma q^\inv\bsh Y^+$, $\Gamma\cdot y\mapsto q\Gamma q^\inv \cdot qy$, called the Hecke tranaslation by $q$. Note that when $q\in\Vbf(\Qbb)$, $(\Pbf,Y;Y^+)=\Vbf\rtimes(q\Gbf q^\inv, qX;qX^+)$, and $\tau_q$ sends the pure section of $M\ra S$ to the pure section of $M'=q\Gamma q^\inv\bsh Y^+\ra S'=q\Gamma_\Gbf q^\inv\bsh qX^+$ given by $(q\Gbf q^\inv,qX;qX^+)\mono(\Pbf,Y;Y^+)$.
 
 Of course we can also talk about more general Hecke translation given by $q\in\Pbf(\adele)$, cf. \cite{chen kuga}. 
 \end{definition}

 The following proposition describes subdata and special subvarieties in an explicit way as we have seen in Introduction.
 \begin{proposition}[description of subdata and special subvarieties]\label{description of subdata and special subvarieties} 
 (1) Let $(\Pbf,Y)=\Vbf\rtimes(\Gbf,X)$ be a Kuga datum fibred over a pure Shimura datum $(\Gbf,X)$. Then a Kuga subdatum $(\Pbf', Y')\subset(\Pbf,Y)$ is of the form $(\Pbf',Y')=\Vbf'\rtimes(v\Gbf'v^\inv, vX')$ where $(\Gbf',X')$ is a pure Shimura subdatum of $(\Gbf,X)$, $\Vbf'$ is a subrepresentation of $\Gbf'$ in $\Vbf$, and $v\in\Vbf(\Qbb)$ conjugate $\Gbf'$ into a Levi $\Qbb$-subgroup $v\Gbf' v^\inv$ of $\Pbf'$. For a fixed $(\Pbf',Y')$, $v$ is unique up to translation by $\Vbf(\Qbb)$.
 
 (2) Let $M=\Gamma\bsh Y^+$ be a connected Kuga variety defined by $(\Pbf,Y;Y^+)=\Vbf\rtimes(\Gbf,X;X^+)$ with $\Gamma=\Gamma_\Vbf\rtimes\Gamma_\Gbf$. The natural projection $\pi:M\ra S=\Gamma_\Gbf\bsh X^+$ defines an abelian $S$-scheme, and the special subvariety $M'$ defined by $(\Pbf',Y';Y'^+)=\Vbf'\rtimes(v\Gbf'v^\inv, vX';vX'^+)$ fits into the diagram $$\xymatrix{M'\ar[r]^\subset & M_{S'}\ar[r]^\subset\ar[d]^\pi& M\ar[d]^\pi\\ & S'\ar[r]^\subset & S}$$ where $S'=\wp_{\Gamma_\Gbf}(X'^+)$ is the pure special subvariety in $S$ defined by $(\Gbf',X';X'^+)$, $M_{S'}$ is the pull-back of $M\ra S$ along $S'\ra S$, equal to the special subvariety defined by $\Vbf\rtimes(\Gbf',X';X'^)$. $M'$ is a torsion subscheme of the abelian $S'$-scheme $M_{S'}\ra S'$ in the sense of \ref{special subscheme}: the subdatum $\Vbf'\rtimes(\Gbf',X';X'^+)$ defines an abelian $S'$-subscheme $A'_{S'}$, and $(v\Gbf'v^\inv, vX';vX'^+)$ defines a special section of $M_{S'}\ra S'$, the ''translation'' by which gives the torsion subscheme $M'$.
 
 \end{proposition}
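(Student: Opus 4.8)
All of the content is in \textbf{(1)}; granting it, \textbf{(2)} follows by unwinding \ref{connected Kuga varieties}, \ref{special subvarieties and Hecke translates} and \ref{special sections and special subschemes}. So let $(\Pbf',Y')\subset(\Pbf,Y)$ be a Kuga subdatum. First I would separate off the unipotent part: since $(\Pbf,Y)$ is of Kuga type, $W_{-1}\Pbf=R_u(\Pbf)=\Vbf$ and $\Pbf/\Vbf=\Gbf$ is reductive (\cite{pink thesis}), so by functoriality of the weight filtration for Shimura subdata $\Vbf':=W_{-1}\Pbf'=\Pbf'\cap\Vbf$ is the unipotent radical of $\Pbf'$, the quotient $\Gbf'_0:=\Pbf'/\Vbf'$ is reductive, and $(\Gbf'_0,X')$ --- with $X'$ the image of $Y'$ in $X$ --- is a pure Shimura subdatum of $(\Gbf,X)$. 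Identifying $\Gbf'_0$ with the image of $\Pbf'$ under $\Pbf\ra\Pbf/\Vbf=\Gbf$, the subgroup $\Vbf'$ is a subrepresentation of $\Gbf'_0$ in $\Vbf$.

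Next I would fix a Levi decomposition $\Pbf'=\Vbf'\rtimes\Lbf$. The reductive $\Qbb$-group $\Lbf$ maps isomorphically onto $\Gbf'_0\subset\Gbf$ under $\Pbf\ra\Gbf$, hence sits inside the preimage $\Vbf\rtimes\Gbf'_0$ of $\Gbf'_0$, in which $\Lbf$ and $\Gbf'_0$ are both Levi subgroups; by Mostow's theorem on the conjugacy of Levi subgroups over a field of characteristic zero there is $v\in\Vbf(\Qbb)$ with $\Lbf=v\Gbf'_0v^\inv$. Since $\Vbf$ is abelian, $v$ centralizes $\Vbf'$, so $\Pbf'=\Vbf'\rtimes v\Gbf'_0v^\inv=v(\Vbf'\rtimes\Gbf'_0)v^\inv$; transporting $Y'$ by $\Int(v^\inv)$ and applying \ref{kuga data} to the ''standard'' Kuga subdatum $\Vbf'\rtimes(\Gbf'_0,X')$ of $(\Pbf,Y)$, whose $Y$-part is $\Vbf'(\Rbb)\times X'$, one gets $(\Pbf',Y')=\Vbf'\rtimes(v\Gbf'_0v^\inv,vX')$ --- the asserted form with $(\Gbf',X')=(\Gbf'_0,X')$. (That $\Vbf'\rtimes(\Gbf',X')$ again satisfies the Hodge-type condition is automatic, $\Vbf'$ being a rational sub-Hodge-structure of the weight $-1$ structure $\Vbf$ under any $x'\in X'$.) For the uniqueness clause: $\Vbf'=\Pbf'\cap\Vbf$, the image $\Gbf'$ of $\Pbf'$ in $\Gbf$, and the image $X'$ of $Y'$ in $X$ are all intrinsic to $(\Pbf',Y')$, while any two Levi subgroups of $\Pbf'$ are conjugate under $R_u(\Pbf')(\Qbb)=\Vbf'(\Qbb)$; this pins $v$ down up to translation by $\Vbf(\Qbb)$ --- more precisely, up to left translation by $\Vbf'(\Qbb)$ and right translation by the group $\Vbf^{\Gbf'}(\Qbb)$ of $\Gbf'$-invariants.

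For \textbf{(2)}, write $M=\Gamma\bsh Y^+$ with $\Gamma=\Gamma_\Vbf\rtimes\Gamma_\Gbf$, let $\pi:M\ra S=\Gamma_\Gbf\bsh X^+$ be the abelian $S$-scheme of \ref{connected Kuga varieties}, and let $M'=\wp_\Gamma(Y'^+)$ be the special subvariety of $(\Pbf',Y';Y'^+)$, with $(\Pbf',Y')$ in the form of \textbf{(1)} and $X'^+$ the image of $Y'^+$ in $X^+$. I would then read off the diagram term by term. Projecting $(\Pbf',Y';Y'^+)$ onto $(\Gbf,X;X^+)$ yields $(\Gbf',X';X'^+)$ --- the image of $v\Gbf'v^\inv$ in $\Gbf$ is $\Gbf'$ and that of $vX'$ is $X'$ --- so $\pi(M')=S':=\wp_{\Gamma_\Gbf}(X'^+)$. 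The fibre product $M_{S'}:=M\times_SS'$ is, by \ref{connected Kuga varieties}, the Kuga variety attached to $\Vbf\rtimes(\Gbf',X';X'^+)$ at the induced level $\Gamma_\Vbf\rtimes(\Gamma_\Gbf\cap\Gbf'(\Qbb))$, which still contains $\Gamma_\Vbf$, a full lattice of $\Vbf(\Qbb)$; and $M'\subset M_{S'}$ because $Y'^+$ lies over $X'^+$. Finally, inside the abelian $S'$-scheme $M_{S'}\ra S'$ the subdatum $\Vbf'\rtimes(\Gbf',X';X'^+)$ defines an abelian $S'$-subscheme $A'_{S'}$, while $v\in\Vbf(\Qbb)$ defines, via Hecke translation (\ref{special subvarieties and Hecke translates}), a section of $M_{S'}\ra S'$ whose class in $\Vbf(\Qbb)/\Gamma_\Vbf$ is torsion (that group being torsion) and whose $\pi_1(S')$-orbit is a special section in the sense of \ref{special sections and special subschemes}(1); translating $A'_{S'}$ by it yields exactly $M'$, which is thereby a special (''torsion'') subscheme of $M_{S'}\ra S'$ in the sense of \ref{special sections and special subschemes}(2).

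I expect the main obstacle to be the first step of \textbf{(1)}: that the unipotent radical of $\Pbf'$ is $\Pbf'\cap\Vbf$ with reductive quotient --- which rests on the functoriality of the weight filtration and the axioms for (mixed) Shimura data, for which I would appeal to \cite{pink thesis} rather than reprove it --- and that a $\Qbb$-rational Levi of $\Pbf'$ can be moved into the fixed Levi $\Gbf$ by an element of $\Vbf(\Qbb)$, which is Mostow's conjugacy theorem applied to $\Vbf\rtimes\Gbf'$. Everything past that point, and in particular all of \textbf{(2)}, is bookkeeping with the uniformizations of \ref{connected Kuga varieties} and the definitions of \ref{special subvarieties and Hecke translates} and \ref{special sections and special subschemes}.
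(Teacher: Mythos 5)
Your proposal is correct, and for part (2) it follows essentially the route the paper takes; the main structural difference is in part (1), which the paper does not prove at all but simply cites (\cite{chen kuga} 2.6 and 2.10). You supply the standard argument there -- functoriality of the weight filtration to get $\Vbf'=\Pbf'\cap\Vbf=R_u(\Pbf')$ with reductive quotient $\Gbf'$, then conjugacy of $\Qbb$-rational Levi subgroups of $\Vbf\rtimes\Gbf'$ under $\Vbf(\Qbb)$ to produce $v$ -- which is exactly what the cited reference does, so this is a completion rather than a divergence. Your sharpening of the uniqueness clause (ambiguity by $\Vbf'(\Qbb)$ and by $\Vbf^{\Gbf'}(\Qbb)$, not merely "by $\Vbf(\Qbb)$") is also fine and in fact more informative than the statement as printed.

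For part (2) the one point where the paper is more careful than you are is the status of $\wp_\Gamma(vX'^+)$: over $S'$ itself this is in general only a multisection of $M_{S'}\ra S'$ (a "special section" in the sense of Definition \ref{special sections and special subschemes}(1)), not a section, since $\Gamma_{\Gbf'}$ need not fix the class of $v$ modulo $\Gamma_\Vbf$. Your sentence calling it "a section of $M_{S'}\ra S'$ whose class in $\Vbf(\Qbb)/\Gamma_\Vbf$ is torsion" conflates the two, although you immediately correct course by invoking its $\pi_1(S')$-orbit. Since Definition \ref{special sections and special subschemes}(2) defines a special subscheme as the image from a finite \'etale cover on which the translate is by an honest torsion section, the clean way to finish -- and the way the paper does it -- is to pass explicitly to $T=\Gamma_\Gbf(v)\bsh X'^+$ with $\Gamma_\Gbf(v)=\{g:v-g(v)\in\Gamma_\Vbf\}$, observe that over $T$ the subvariety attached to $(v\Gbf'v^\inv,vX';vX'^+)$ becomes a genuine torsion section of order the minimal $N$ with $Nv\in\Gamma_\Vbf$, translate the abelian $T$-subscheme given by $\Vbf'\rtimes(\Gbf',X';X'^+)$ by it, and take the image in $M$. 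This is a small repair, not a gap in the idea; everything else in your outline matches the paper's bookkeeping.
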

 
 \begin{proof}
 
 The part (1) is from \cite{chen kuga} 2.6 and 2.10. We only outline how (2) is interpreted via the special subschemes. We may thus assume that $S'=S$.
 
 Write $\Gamma_\Gbf(v)=\{g\in\Gamma_\Gbf:v-g(v)\in\Gamma_\Vbf\}$. Then $v\Gamma_\Gbf(v)v=\Gamma_\Vbf\rtimes\Gamma_\Gbf\cap v\Gamma_\Gbf v^\inv$. Base change to $f:T=\Gamma_\Gbf(v)\bsh X^+\ra S$, we get the abelian $T$-scheme $M_T=(\Gamma_\Vbf\rtimes\Gamma_\Gbf(v))\bsh Y^+\ra T$. Aside from the neutral section $T\mono M_\Tbf$ given by $(\Gbf,X;X^+)\subset(\Pbf,Y;Y^+)$ and $\Gamma_\Gbf(v)\subset\Gamma_\Vbf\rtimes\Gamma_\Gbf(v)$, we also have the pure special subvariety $T(v)=\wp_\Gamma(vX^+)$ corresponding to $(v\Gbf v^\inv,vX;vX^+)$. Since we have shrinked to $\Gamma_\Gbf(v)$, the equality $v(\Gamma_\Vbf\rtimes\Gamma_\Gbf(v))v^\inv=\Gamma_\Vbf\rtimes v\Gamma_\Gbf(v)v^\inv$ implies that $T(v)=v\Gamma_\Gbf(v)v^\inv\bsh vX^+\isom\Gamma_\Gbf(v)\bsh X^+$, and thus $T(v)$ is a torsion section, whose torsion order is the minimal integer $N>0$ such that $N\cdot v\in\Gamma_\Vbf$. The subdatum $\Vbf'\rtimes(\Gbf,X;X^+)$ defines an abelian $T$-subscheme of $M_T$, whose translation by $T(v)$ is a torsion subscheme of $M_T$. Its image under $M_T\ra M$ is a special subscheme of $M$, which is exactly the special subvariety $M'=\wp_\Gamma(Y'^+)$.
 \end{proof}

\section{special subschemes in Kuga varieties}

In this section we show that special subschemes in a fibred Kuga variety $M\ra S$ are special subvarieties that are faithfully flat over $S$. The proof makes use of some facts from the theory of variation of Hodge structures, details of which can be found in \cite{deligne pspm}, \cite{milne introduction}, \cite{moonen lecture} etc. We adopt standard abbreviations such as ''HS'' for Hodge structures, ''PVHS'' for polarized variation of Hodge structures, etc.

\begin{theorem}[abelian schemes vs. variation of Hodge structures, \cite{deligne hodge II} 4.4.3(a)]\label{abelian schemes vs. variation of Hodge structures} Let $S$ be a smooth scheme over $\Cbb$ of finite type. Then we have the equivalence between the following two categories:\begin{itemize}
\item (1) the category of abelian $S$-schemes (with morphisms respecting the group laws);
\item (2) the category of polarizable variation of integral Hodge structures ($\Zbb$-PVHS) of type $\{(-1.0),(0,-1)\}$.
\end{itemize}

\end{theorem}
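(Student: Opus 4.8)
The plan is to construct the functor from category (1) to category (2) and then produce a quasi-inverse, checking full faithfulness and essential surjectivity along the way. First I would build the functor $A\mapsto H_1(A/S)$: given an abelian $S$-scheme $\pi:A\ra S$ with $S$ smooth of finite type over $\Cbb$, the associated complex-analytic family $A^{\an}\ra S^{\an}$ has fibres that are complex tori, and the local system $R^1\pi^{\an}_*\Zbb$ (or equivalently its dual $R_1$) underlies a variation of Hodge structures of weight $-1$, with the Hodge filtration coming from the relative de Rham cohomology $H^1_{\mathrm{dR}}(A/S)$ together with the Gauss--Manin connection; the Hodge type is $\{(-1,0),(0,-1)\}$ by the classical computation for abelian varieties. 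A polarization of $A/S$ (which exists fibrewise and can be chosen over $S$ after passing to a suitable structure, or one simply records polarizability) gives the required polarization form on the VHS, so the functor lands in category (2). Morphisms of abelian $S$-schemes respecting the group laws induce morphisms of the associated $\Zbb$-PVHS functorially.

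Next I would address full faithfulness. Given abelian $S$-schemes $A,B$, a morphism $A\ra B$ over $S$ respecting group laws is the same as a morphism of the associated complex tori families, and by rigidity of abelian schemes (and the theorem of the fixed part / the fact that a morphism of abelian schemes is determined by its effect on $H_1$) the map $\Hom_S(A,B)\ra\Hom_{\Zbb\text{-PVHS}}(H_1(A),H_1(B))$ is injective; surjectivity is the statement that every morphism of the underlying $\Zbb$-PVHS is algebraic, which one reduces to the complex-analytic category, builds the morphism of complex tori fibrewise and relatively, and then invokes GAGA/Riemann existence (or the algebraization of abelian schemes over a base of finite type over $\Cbb$) to descend it to an $S$-morphism of schemes. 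For essential surjectivity, starting from a $\Zbb$-PVHS $\Vbb$ of the given type with polarization, I would form, over $S^{\an}$, the family of complex tori $\Vbb_\Oscr / (F^0\Vbb_\Oscr + \Vbb_\Zbb)$ — the fibre at $s$ is $V_{s,\Cbb}/(F^0_s + V_{s,\Zbb})$, which is a compact complex torus because the Hodge type forces $V_{s,\Cbb}=F^0_s\oplus \bar{F^0_s}$ — and the polarization endows each fibre with a Riemann form, hence this analytic family is a family of abelian varieties; then one algebraizes it to an abelian $S$-scheme using that $S$ is a smooth variety over $\Cbb$.

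The main obstacle I expect is the algebraization step: producing genuine schemes (and scheme morphisms) from the a priori merely complex-analytic constructions, uniformly over the base $S$ rather than just fibrewise. This is exactly where one needs $S$ to be a scheme of finite type over $\Cbb$ and where the cited result of Deligne (\cite{deligne hodge II} 4.4.3(a)) does the real work — relative GAGA plus the representability of the relevant moduli data, or equivalently Deligne's theorem that a polarizable $\Zbb$-VHS of type $\{(-1,0),(0,-1)\}$ over a smooth base is of "geometric origin" in the strong sense of arising from an abelian scheme. Since the statement is quoted directly from that reference, in the paper I would simply cite it; the sketch above indicates why it is true and records the explicit functors $A\mapsto R_1\pi_*\Zbb$ and $\Vbb\mapsto \Vbb_\Oscr/(F^0+\Vbb_\Zbb)$ that will be used in Section 4 to match special subschemes with faithfully flat special subvarieties.
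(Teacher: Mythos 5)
Your proposal is correct and matches the paper's treatment: the statement is quoted from Deligne (\cite{deligne hodge II} 4.4.3(a)) and not reproved, and the paper, like you, only records the two explicit functors --- $A\mapsto\Hscr(A/S)$ dual to $R^1\pi_*\Zbb_A$ in one direction, and the quotient $\Hscr\otimes\Oscr_S/F^0 \to A$ via the exponential sequence in the other. Your sketch of why the equivalence holds (Riemann forms from the polarization, algebraization over the finite-type base) is a faithful outline of the cited argument, so nothing further is needed.
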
 
 The equivalence sends an abelian $S$-scheme $f:A\ra S$ to the $\Zbb$-PVHS $\Hscr=\Hscr(A/S)$ whose underlying local system of $\Zbb$-modules is dual to $R^1f_*\Zbb_A$, with $\Hscr_s=H_1(A_s,\Zbb)$ as the fiber at $s$.  The exponential map realize $A$ as the the quotient sheaf $$0\ra\Hscr\ra\Lie_SA\ra A\ra 0$$ where $\Lie_SA$ is the sheaf of ''vertical tangents'' of $A\ra S$, i.e. the pull-back of the relative tangent sheaf $\Der_SA$ along the neutral section $S\mono A$. The Hodge decomposition in the relative setting is $$0\ra F^0\ra\Hscr\otimes_{\Zbb_S}\Oscr_S\ra\Lie_SA\ra 0$$ with $F^0$ the 0-th piece of the Hodge filtration.
 
 Note that when we fix $A\ra S$ an abelian $S$-scheme, the equivalence above also implies the bijection between
 
 \begin{itemize}
 \item (1)' abelian $S$-subschemes of $A$;
 
 \item (2)'  sub-variation of rational Hodge structures of $\Hscr_\Qbb=\Hscr\otimes_{\Zbb_S}\Qbb_S$
 \end{itemize}
 which sends an abelian $S$-subscheme $A'$ to $\Hscr(A'/S)_\Qbb$.  Conversely, given $\Hscr'_\Qbb$ and object in (2)', $\Hscr':=\Hscr'_\Qbb\cap\Hscr$ is a $\Zbb$-PVHS of type $\{(-1,0),(0,-1)\}$ which defines an abelian $S$-scheme $A'$, and the eveident map $\Hscr'\mono\Hscr$ shows that $A'$ is an abelian $S$-subscheme of $A$.
 
 \bigskip
 

Deligne showed in \cite{deligne pspm} that a pure Shimura datum $(\Gbf,X)$ is universal in the following sense:
 
 \begin{theorem}[moduli of Hodge structures]\label{moduli of Hodge structures} Let $(\Gbf,X)$ be a pure Shimura datum. Then the composition $w:\Gbb_\mrm\mono\Sbb\ot{x}\ra\Gbf_\Rbb$ is a central cocharacter, independent of $x\in X$. For any (algebraic) representation $\rho:\Gbf\ra\GL_\Vbf$ over $\Qbb$ such that $\rho\circ w:\Gbb_\mrm\ra\GL_\Vbf$ is some central cocharacter $t\mapsto t^k\id_\Vbf$ defined over $\Qbb$, the constant local system $\Vscr$ on $X$ with fiber $\Vbf(\Qbb)$ underlies a unique $\Qbb$-PVHS
 
 \end{theorem}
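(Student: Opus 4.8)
The plan is to deduce the whole statement from Deligne's axioms for the pure Shimura datum $(\Gbf,X)$, in particular from the facts that for $x\in X$ the adjoint representation $\Ad\circ x:\Sbb\ra\GL_{\gfrak,\Rbb}$ is a Hodge structure of type $\{(-1,1),(0,0),(1,-1)\}$ and that $\Int(x(i))$ is a Cartan involution of $\Gbf^\ad_\Rbb$. First I would settle the claim about $w$: since the Hodge structure $\Ad\circ x$ has weight $0$, its restriction $\Ad\circ w$ to $\Gbb_\mrm$ is the trivial grading on $\gfrak_\Rbb$, so $w(\Gbb_\mrm)\subset\Ker(\Ad:\Gbf_\Rbb\ra\GL_{\gfrak,\Rbb})=Z(\Gbf)_\Rbb$ and $w$ is a central cocharacter. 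As $X$ is a single $\Gbf(\Rbb)$-conjugacy class and a central cocharacter is invariant under conjugation, $w$ does not depend on $x\in X$.

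For the second assertion I would first pin down the only possible candidate. For each $x\in X$ the composite $\rho\circ x:\Sbb\ra\GL_{\Vbf,\Rbb}$ is a Hodge structure on $\Vbf(\Rbb)$ whose restriction to $\Gbb_\mrm$ is $\rho\circ w=(t\mapsto t^k\id_\Vbf)$; since this is scalar, $\rho\circ x$ is pure of one weight (determined by $k$), its weight grading is all of $\Vbf$ and hence defined over $\Qbb$, so we obtain a $\Qbb$-Hodge structure on $\Vbf(\Qbb)$. Let $F^\bullet_x\subset\Vbf(\Cbb)$ denote the corresponding Hodge filtration, cut out by the Hodge cocharacter $\mu_x:\Gbb_{\mrm,\Cbb}\ra\Gbf_\Cbb$ with $\mu_x(z)=(\rho\circ x)_\Cbb(z,1)$. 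The assignment $x\mapsto F^\bullet_x$ is a $\Gbf(\Rbb)$-equivariant map from $X$ into the flag variety $\Gr$ of filtrations of $\Vbf(\Cbb)$ of the given type, and it is forced to be the Hodge bundle filtration of any $\Qbb$-PVHS structure on the constant local system $\Vscr$ whose fibre at every $x$ is the Hodge structure $(\Vbf(\Qbb),\rho\circ x)$. Since a variation of Hodge structures is determined by its Hodge bundle filtration, this already yields uniqueness; only existence has content.

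It then remains to verify that $\{F^\bullet_x\}_{x\in X}$ defines a \emph{polarizable} variation. Holomorphicity: fixing $x_0$ in a connected component $X^+$, one realizes the compact dual $\check X^+=\Gbf_\Cbb/P_{\mu_{x_0}}$ together with its Borel embedding $X^+\mono\check X^+$, which is an open holomorphic immersion by the type condition on $\gfrak_\Rbb$; composing with the $\Gbf_\Cbb$-orbit map to $\Gr$ (which sends the parabolic $P_{\mu_{x_0}}$ into $\Stab(F^\bullet_{x_0})$) shows $x\mapsto F^\bullet_x$ is holomorphic. Griffiths transversality: the holomorphic tangent space of $X^+$ at $x_0$ is identified, again by the type condition, with the $(-1,1)$-component $\gfrak^{-1,1}_\Cbb$ for $\Ad\circ x_0$, and $d\rho$ carries $\gfrak^{-1,1}$ into endomorphisms of $\Vbf_\Cbb$ of type $(-1,1)$, which move $F^p_{x_0}$ into $F^{p-1}_{x_0}$; hence $\nabla F^p\subset F^{p-1}$. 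Polarizability: using the Cartan involution $\Int(x(i))$ one constructs a nondegenerate $\Qbb$-bilinear form $\psi$ on $\Vbf$, symmetric or alternating according to the parity of the weight, which at every $x$ is a morphism of Hodge structures $\Vbf\otimes\Vbf\ra\Qbb(m)$ for the suitable Tate twist and for which $(u,v)\mapsto\psi(u,x(i)v)$ is positive definite on $\Vbf(\Rbb)$; concretely one builds $\psi$ on the $\Gbf$-isotypic pieces of $\Vbf$, each carrying a $\Gbf$-invariant form up to the character through which $w$ acts, and then fixes signs so that $\Int(x(i))$ makes it definite. Because $\Vscr$ is constant the form $\psi$ is automatically flat, hence it polarizes the variation.

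The main obstacle is this last triple of checks — holomorphicity, horizontality, and above all polarizability of the homogeneous family $\{F^\bullet_x\}$ — which is precisely the content of Deligne's theorem in \cite{deligne pspm} (see also \cite{milne introduction}, \cite{moonen lecture}); the remaining bookkeeping is formal. For a fully self-contained treatment the genuinely delicate point is the polarizability, where one must use the Cartan involution axiom together with a careful choice of signs on the isotypic components to obtain a $\Qbb$-rational form of the correct positivity.
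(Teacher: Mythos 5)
The paper gives no proof of this theorem: it is quoted directly from Deligne \cite{deligne pspm} (see also \cite{milne introduction}, \cite{moonen lecture}), so there is no in-text argument to compare yours against. Your outline is the standard proof of Deligne's result and is essentially correct: centrality of $w$ follows from the weight-$0$ type of $\Ad\circ x$ on $\gfrak$, uniqueness is formal once the fibres are prescribed, holomorphy comes from the Borel embedding into the compact dual, Griffiths transversality from the identification of the holomorphic tangent space with the $(-1,1)$-part of $\gfrak_\Cbb$, and polarizability from the Cartan involution $\Int(x(i))$. Two remarks. First, the uniqueness assertion is only meaningful under the normalization you implicitly impose, namely that the fibre at $x$ is the Hodge structure $\rho\circ x$ (equivalently, $\Gbf(\Rbb)$-equivariance of the family); as literally stated, a constant local system could a priori underlie other PVHS, and it is with this normalization that the theorem is used later in the paper (e.g.\ in \ref{Kuga-Siegel case}). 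Second, you correctly identify polarizability as the one genuinely delicate step: Deligne's polarizability criterion concerns a Cartan involution on the image of $\Gbf$ modulo the weight cocharacter, whereas the Shimura axioms only give one on $\Gbf^{\ad}$, and closing that gap is exactly where the hypothesis that $\rho\circ w$ is a $\Qbb$-rational scalar cocharacter enters; your deferral of this point to \cite{deligne pspm} is consistent with the paper, which defers the entire theorem there.
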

 
We also need the notion of (generic) Mumford-Tate groups, cf. \cite{moonen lecture}. 
 
\begin{definition}[Mumford-Tate groups]\label{Mumford-Tate groups}

(1) For $(V,h:\Sbb\ra\GL_{V,\Rbb})$ a $\Qbb$-HS, its Mumford-Tate group is the smallest $\Qbb$-subgroup $\Gbf$ of $\GL_V$ such that $h(\Sbb)\subset\Gbf_\Rbb$. $\Gbf$ is connected. If the Hodge structure is polarizable, then $\Gbf$ is reductive. We write $\Gbf=\MT(h)$.

If $W$ is a space of tensors on $V$, i.e. a subquotient of  $\oplus_iV^{\otimes m_i}\otimes (V^\vee)^{\otimes n_i}$ ($m_i,n_i\in\Nbb$) , then $W$ is a $\Qbb$-HS for the natural action of $\Sbb$ \ifof it is stabilized by the natural action of $\Gbf$ by the tensor constructions. In particular, writing $W^{0,0}$ for the subspace of $W_\Cbb$ fixed by $\Sbb_\Cbb$, then $W\cap W^{0,0}$ equals $W^\Gbf$ the $\Qbb$-subspace fixed by $\Gbf$, and this space is called the space of Hodge class of type $(0,0)$ in $W$.


(2) Let $S$ be a complex manifold, and $(\Vscr,\Fscr)$ a $\Qbb$-VHS on $S$. We fix a model $V$ for $\Vscr$, i.e. a $\Qbb$-vector space such that for each $x\in S$ we have an isomorphism $V\isom \Vscr_x$, and that for any $x,y\in S$, the induced isomorphism $\Vscr_x\isom V\isom\Vscr_y$ is induced by a prescribed path in $S$ from $x$ to $y$. Typically we fix a base point $s\in S$ and a path $\ell_x$ from $s$ to $x$ for each $x$, so that $V=\Vscr_s$ and $V\isom\Vscr_x$ is given by $\ell_x$.

For each $x\in S$, we have the Mumford-Tate group at $x$, i.e. the Mumford-Tate group $\MT(\Vscr_x)$, which is identified as a $\Qbb$-subgroup of $\GL_V$ via the isomorphism $V\isom\Vscr_x$. There exists a countable union of analytic subspaces $\Sigma=\bigcup_nS_n$ and a $\Qbb$-subgroup $\Gbf\subset\GL_V$ such that $\Gbf=\MT(\Vscr_x)$ for any $x\notin \Sigma$. For $x\in\Sigma$, we have $\MT(\Vscr_x)\subsetneq \Gbf$. $\Gbf$  is called the generic Mumford-Tate group of the $\Qbb$-VHS $(\Vscr,\Fscr)$. When the $\Qbb$-VHS is polarizable, $\Gbf$ is reductive.

\end{definition}

\begin{remark}
In general, the Mumford-Tate group of a $\Qbb$-HS $(V,h)$ is a $\Qbb$-subgroup of $\GL_V\times\Gbb_\mrm$ so that the Hodge classes of $(p,p)$-type ($p\in\Zbb$) can be studied in the same way as in the above definition. In this paper we will only need Hodge classes of type $(0,0)$ and the above definition suffices. 
\end{remark}

\begin{example}[Kuga-Siegel case]\label{Kuga-Siegel case}

Let $(\Pbf,Y)=\Vbf\rtimes(\Gbf,X)$ be a Kuga datum. Then the representation $\Gbf\ra\GL_\Vbf$ defines a $\Qbb$-VHS $\Vscr$ on $X$ whose underlying local system is the constant sheaf of fiber $\Vbf(\Qbb)$. By \ref{moduli of Hodge structures}, this $\Qbb$-VHS is polarizable. Since the local system is constant, the polarization is given by some symplectic form $\psi:\Vbf\otimes\Vbf\ra\Qbb(-1)$ which $\Gbf$ preserves up to similitude. Hence the Kuga datum $(\Pbf,Y)$ is equivalently given by a homomorphism of pure Shimura data $(\Gbf,X)\ra(\GSp_\Vbf,\Hscr_\Vbf)$.

The image of $(\Gbf,X)\ra(\GSp_\Vbf,\Hscr_\Vbf)$ is a subdatum  $(\Gbf',X')\subset(\GSp_\Vbf,\Hscr_\Vbf)$, which is also irreducible as $(\Gbf,X)$ already is. It follows immediately from the definition \ref{Mumford-Tate groups} that $\Gbf'$ is the generic Mumford-Tate group of $\Vscr$ on $X$.

Take a lattice $\Gamma_\Vbf$ in $\Vbf(\Qbb)$, and a torsion-free congruence subgroup $\Gamma_\Gbf\subset\Gbf(\Qbb)_+$ stabilizing $\Gamma_\Vbf$, we get the connected Shimura variety $S=\Gamma_\Gbf\bsh X^+$. The representation $\Gamma_\Gbf\ra\GL(\Gamma_\Vbf)$ defines a $\Zbb$-PVHS, as $\Gamma_\Gbf$ acts on $X^+$ through the fundamental group of $S$, and the $\Qbb$-PVHS associated to it is obviously $\Vscr$. The abelian $S$-scheme corresponding to this $\Zbb$-PVHS is exactly the fibred Kuga variety $M=\Gamma\bsh Y^+\ra S$ with $\Gamma=\Gamma_\Vbf\rtimes\Gamma_\Gbf$ and $Y^+=\Vbf(\Rbb)\times X^+$.

\end{example}

In the rest of this section, we fix $\pi:M\ra S$ an abelian scheme given by a fibred connected Kuga variety $M=\Gamma\bsh Y^+$, defined by the datum $(\Pbf,Y)=\Vbf\rtimes(\Gbf,X)$, with a torsion-free congruence subgroup $\Gamma=\Gamma_\Vbf\rtimes\Gamma_\Gbf$, and $S=\Gamma_\Gbf\bsh X^+$. If $M'\subset M$ is a special subvariety such that $\pi(M')=S$, then by \ref{description of subdata and special subvarieties} we see that $M$ is a special subscheme of $M\ra S$. We proceed to prove the inverse:

\begin{theorem}[special subschemes vs. special subvarieties]\label{special subschemes vs. special subvarieties} Let $M\ra S$ be defined by $(\Pbf,Y)=\Vbf\rtimes(\Gbf,X)$ and $\Gamma=\Gamma_\Vbf\rtimes\Gamma_\Gbf$ as above. Let $M'\subset M$ be a special subscheme. Then $M'$ is a special subvariety, and $\pi(M')=S$.

\end{theorem}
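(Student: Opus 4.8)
The plan is to use the dictionary between abelian schemes and variations of Hodge structures to identify the data defining a special subscheme with a Kuga subdatum. By \ref{special sections and special subschemes}(2), a special subscheme $M'\subset M$ is the image $\pi'(B_N)$ of $B_N=A'_N+t_N$ under the projection $\pi'\colon M_N=M\times_SS_N\ra M$, where $S_N$ is the finite \'etale cover of $S$ trivializing $M[N]$, $A'_N$ is an abelian $S_N$-subscheme of $M_N$, and $t_N$ is an $N$-torsion section of $M_N\ra S_N$; in particular everything already lives at the finite level $S_N$, so no passage through $\Shat$ is needed. Note that $M_N\ra S_N$ is again a fibred connected Kuga variety attached to $\Vbf\rtimes(\Gbf,X)$, at a deeper torsion-free congruence level $\Gamma_\Vbf\rtimes\Gamma_N$ with $\Gamma_N\subset\Gamma_\Gbf$ of finite index, and since $S_N\ra S$ is finite \'etale the generic Mumford-Tate group of the associated variation is still $\rho(\Gbf)$.

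The key point is the bijection between abelian $S_N$-subschemes of $M_N$ and $\Gbf$-subrepresentations $\Vbf'\subset\Vbf$. By \ref{abelian schemes vs. variation of Hodge structures} and its complement on sub-objects, abelian $S_N$-subschemes of $M_N$ correspond to sub-$\Qbb$-VHS of the variation $\Vscr$ of \ref{Kuga-Siegel case} on $S_N$. A $\Gbf$-subrepresentation $\Vbf'$ is $x(\Sbb)$-stable for every $x\in X^+$, since $x(\Sbb)\subset\Gbf_\Rbb$, hence underlies such a sub-VHS. Conversely, pulling a sub-$\Qbb$-VHS back to the simply connected $X^+$ it becomes a single $\Qbb$-subspace $\Vbf'\subset\Vbf(\Qbb)$ that is a sub-Hodge-structure at every point; picking a Hodge-generic $x_0\in X^+$ as in \ref{Mumford-Tate groups}, the subspace $\Vbf'$ is stable under $\MT(\Vscr_{x_0})$, which by \ref{Kuga-Siegel case} is the generic Mumford-Tate group $\rho(\Gbf)$, so $\Vbf'$ is a $\Gbf$-subrepresentation. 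Under this bijection $A'_N$ is the abelian $S_N$-subscheme $\wp(\Vbf'(\Rbb)\times X^+)$, i.e. the one cut out by the subdatum $\Vbf'\rtimes(\Gbf,X;X^+)$.

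It then remains to run the explicit description of \ref{description of subdata and special subvarieties}. Because $M_N[N]$ is constant over $S_N$, the $N$-torsion section $t_N$ is a constant section, so $t_N=\wp(\{v\}\times X^+)$ for some $v\in\Vbf(\Qbb)$ with $Nv\in\Gamma_\Vbf$; this is precisely the special section attached to $(v\Gbf v^\inv,vX;vX^+)$, as in the proof of \ref{description of subdata and special subvarieties}(2). Adding fibrewise by the group law of \ref{group law}, $B_N=A'_N+t_N=\wp\big((v+\Vbf'(\Rbb))\times X^+\big)$, and $(v+\Vbf'(\Rbb))\times X^+$ is exactly the connected component $Y'^+\subset Y^+$ of the Kuga subdatum $(\Pbf',Y';Y'^+)=\Vbf'\rtimes(v\Gbf v^\inv,vX;vX^+)\subset(\Pbf,Y;Y^+)$ given by \ref{description of subdata and special subvarieties}(1). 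Pushing forward along $\pi'\colon M_N\ra M$ gives $M'=\pi'(B_N)=\wp_\Gamma(Y'^+)$, which is by definition a special subvariety; and since the pure part of $(\Pbf',Y')$ is the whole of $(\Gbf,X)$, its image $\pi(M')$ is the pure special subvariety of $S$ attached to $(\Gbf,X;X^+)$, namely $S$ itself, so $\pi(M')=S$.

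The main obstacle is the Hodge-theoretic bijection between abelian $S_N$-subschemes and $\Gbf$-subrepresentations: it rests on the fact, recorded in \ref{Kuga-Siegel case}, that the generic Mumford-Tate group of $\Vscr$ is $\rho(\Gbf)$ — so that a fibrewise sub-Hodge-structure at a Hodge-generic point is automatically $\Gbf$-stable — together with the simple-connectedness of $X^+$, which turns a sub-variation into a single subspace. Everything after that is an unwinding of the definitions of special subscheme, special subvariety, and the uniformization maps.
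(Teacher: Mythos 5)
Your proof is correct, and at the one genuinely substantive step it takes a different (and arguably more economical) route than the paper. Both arguments treat the torsion part identically: over the level cover $S_N$ the $N$-torsion splits into constant sections, each of which is the special subvariety attached to $(v\Gbf v^\inv,vX;vX^+)$ for $v\in\frac{1}{N}\Gamma_\Vbf$, and the translation and push-forward to $M$ are the same unwinding of \ref{description of subdata and special subvarieties}. Where you differ is in showing that an abelian $S_N$-subscheme $A'_N$ comes from a Kuga subdatum $\Vbf'\rtimes(\Gbf,X)$. The paper first realizes $A'_N$ as the neutral component of the kernel of a global endomorphism (via the splitting theorem \ref{splitting} over the generic fibre and the constancy of $\Endbf_S^\circ(M)$, which rests on Grothendieck's theorem \ref{grothendieck} and requires shrinking the level), identifies $\Endbf_S^\circ(M)$ with $\End(\Vbf)^\Gbf$, and takes $\Vbf'=\Ker\phi$. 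You instead invoke directly the bijection, recorded after \ref{abelian schemes vs. variation of Hodge structures}, between abelian subschemes and sub-$\Qbb$-VHS, use simple connectedness of $X^+$ to turn the sub-variation into a single subspace $\Vbf'$, and use the identification of the generic Mumford--Tate group with $\rho(\Gbf)$ from \ref{Kuga-Siegel case} to conclude that $\Vbf'$ is $\Gbf$-stable (a sub-$\Qbb$-HS is stable under the Mumford--Tate group of any of its Hodge-generic fibres). This bypasses the endomorphism-sheaf machinery of Section 2 entirely for this theorem; the paper's route has the merit of reusing tools it must develop anyway for the relative Manin--Mumford statement, while yours is shorter and makes the Hodge-theoretic content (generic Mumford--Tate group equals $\rho(\Gbf)$) more visible. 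Both proofs ultimately rest on the same two inputs: the Deligne equivalence and the irreducibility of the datum.
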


\begin{proof}
The pure Shimura variety $S=\Gamma\bsh X^+$ is normal. For any non-zero integer $N\in\Nbb$, write $\Gamma_\Gbf(N)=\Ker(\Gamma_\Gbf\ra\GL(\Gamma_\Vbf)\ra\GL(\Gamma_\Vbf/N\Gamma_\Vbf)$, then $\Gamma_\Gbf(N)$ is a congruence subgroup in $\Gamma_\Gbf$, and the base change $$\pi_N:M_N=\Gamma_\Vbf\rtimes\Gamma_\Gbf(N)\bsh Y^+\ra S_N=\Gamma_\Gbf(N)\bsh X^+$$ is an abelian $S_N$-scheme in which the $N$-torsion subgroup split, i.e. $M_N[N]=\coprod_{v}M_N(v)$, where\begin{itemize}

\item the disjoint union is indexed by $\frac{1}{N}\Gamma_\Vbf/\Gamma_\Vbf$, which is the $N$-torsion subgroup of $\Gamma_\Vbf\bsh\Vbf(\Rbb)$;

\item for $v\in\Vbf(\Qbb)$, $M_N(v)$ stands for the special subvariety defined by $(v\Gbf v^\inv, vX;vX^+)$. 
\end{itemize}

Note that for general $v\in\Vbf(\Qbb)$, the special subvariety $M_N(v)$ only depends on the class of $v$ in $\Gamma_\Vbf\bsh\Vbf(\Qbb)$, and the resulting special subvariety is $\Gamma'\bsh vX^+$, with $\Gamma'=\Gamma_\Vbf\rtimes\Gamma_\Gbf\cap v\Gbf(\Qbb)_+v^\inv$, which is equal to $v\Gamma_\Gbf(v)v^\inv$ with $$\Gamma_\Gbf(v)=\{g\in\Gamma_\Gbf:g(v)-v\in\Gamma_\Vbf\}.$$ Since $\Gamma_\Gbf(N)$ is the kernel of $\Gamma_\Gbf\ra\GL(\Gamma_\Vbf/N)\isom\GL(\frac{1}{N}\Gamma_\Vbf/\Gamma_\Vbf)$, we see that $g\in\Gamma_\Gbf$ always fixes the class of $v$ modulo $\Gamma_\Vbf$ when $v\in\frac{1}{N}\Gamma_\Vbf$, hence $M_N(v)\isom\Gamma_\Gbf(N)\bsh X^+=S_N$. This isomorphism is actually the Hecke translation given by $v$, using $v(\Gamma_\Vbf\rtimes\Gamma_\Gbf(N))v^\inv\isom\Gamma_\Vbf\rtimes\Gamma_\Gbf(N)$, cf. \ref{special subvarieties and Hecke translates} and \ref{group law}.

By the definition of special subschemes, it remains to show that every abelian $S$-subscheme of $M\ra S$ is a special subvariety $M'$ such that $\pi(M')=S$. It suffices to treat the problem for a sufficiently small level $\Gamma_\Gbf$, so by shrinking $\Gamma_\Gbf$ we may assume that the sheaf of endomorphism algebra $\Endbf_S(M)$ is constant, as we have seen that over the integral normal scheme $S$ the sheaf is locally constant and its generic fiber is a finite rank $\Zbb$-algebra.   Since we only need to study the neutral component of the kernel of endomorphisms, we may replace $\Endbf_S(M)$ by the isogeny algebra $\Endbf^\circ_S(M)$, which is also constant.

Passing to isogeny from the equivalence in \ref{abelian schemes vs. variation of Hodge structures}, the sheaf $\Endbf^\circ_S(M)$ is the same as the endomorphism sheaf of  the $\Qbb$-PVHS $\Hscr_\Qbb=\Hscr(M/S)\otimes_{\Zbb_S}\Qbb_S$, namely the sheaf associated to the Hodge classes of type $(0,0)$ in $\Endbf(\Hscr_\Qbb)\isom\Hscr_\Qbb\otimes_{\Qbb_S}\Hscr_\Qbb^\vee$ with $\Hscr_\Qbb^\vee$ the $\Qbb$-PVHS dual to $\Hscr_\Qbb$. Since $\Hscr_\Qbb$ is given by the representation $\Gbf\ra\GL_\Vbf$, $\Endbf(\Hscr_\Qbb)$ is given by the tensor representation $\Gbf\ra\GL_{\End(\Vbf)}$, and the $(0,0)$ part corresponds to the trivial subrepresentation $\End(\Vbf)^\Gbf$. So the constant sheaf $\Endbf^\circ_S(M)$ is associated to the vector space $\Endbf(\Vbf)^\Gbf$. 

Let $M'$ be an abelian $S$-subscheme, realized as the neutral component of some $\phi\in\Endbf_S^\circ(M)$. We thus identify $\phi$ as an element of $\End(\Vbf)^\Gbf$, and it follows from the equivalences \ref{abelian schemes vs. variation of Hodge structures} and the characterization of abelian subschemes via sub-$\Qbb$-PVHS that $M'$ corresponds to the $\Qbb$-PVHS $\Hscr'$ given by $\Vbf'$ which is the kernel of $\phi:\Vbf\ra\Vbf$. Clearly $\Vbf'$ is a subrepresentation of $\Gbf$ in $\Vbf$, and for any $x\in X$, the action of $\Sbb$ on $\Vbf'$ through $x$ makes $\Vbf$ a sub-$\Qbb$-HS of $\Vbf$, hence is of type $\{(-1,0),(0,-1)\}$. We obtain a Kuga subdatum $(\Pbf',Y')=\Vbf'\rtimes(\Gbf,X)$, which defines an abelian $S$-subscheme, whose associated $\Qbb$-PVHS is the one given by the action of $\Gbf$ on $\Vbf'$. Therefore this abelian $S$-subscheme is equal to $M'$, and $M'$ is a special subvariety, faithfully flat over $S$ under $\pi$.
\end{proof}

We immediately get the desired variant of the Manin-Mumford conjecture in the Kuga setting
\begin{corollary}Let $\pi:M\ra S$ be a Kuga variety fibred over a pure Shimura variety $S$ as an abelian $S$-scheme. Let $(M_n)$ be a sequence of special subvarieties faithfully flat over $S$, i.e. $\pi(M_n)=S$ for all $n$. Then the Zariski closure of $\bigcup_nM_n$ is a finite union of special subvarieties faithfully flat over $S$.
\end{corollary}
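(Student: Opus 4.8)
The plan is to reduce the statement directly to the relative Manin-Mumford theorem for abelian schemes, Proposition~\ref{relative Manin-Mumford}, translating between ``special subvariety'' and ``special subscheme'' by means of Theorem~\ref{special subschemes vs. special subvarieties}. First one checks that the hypotheses of Proposition~\ref{relative Manin-Mumford} are met: the base $S=\Gamma_\Gbf\bsh X^+$ is a connected pure Shimura variety, hence by Assumption~\ref{assumption} and \cite{baily borel} a smooth, in particular normal, integral variety over $\Cbb$, so $S$ is a normal integral scheme of characteristic zero and $\pi:M\ra S$ is an abelian $S$-scheme over such a base.

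Next I would record the easy half of the dictionary: by Proposition~\ref{description of subdata and special subvarieties}(2), a special subvariety $M'\subset M$ with $\pi(M')=S$ is exactly a special subscheme of the abelian $S$-scheme $M\ra S$ in the sense of \ref{special sections and special subschemes}; indeed, when the image equals $S$ the corresponding subdatum has full pure part $(\Gbf,X;X^+)$, so $M'$ is the $\pi_1(S)$-orbit of a torsion translate of an abelian $S$-subscheme, which is the defining shape of a special subscheme. In particular every $M_n$ in the given sequence is a special subscheme of $M\ra S$. Applying Proposition~\ref{relative Manin-Mumford} to $(M_n)$, the Zariski closure $Z$ of $\bigcup_n M_n$ is a finite union $Z=\bigcup_{j=1}^r Z_j$ of special subschemes of $M\ra S$; by Theorem~\ref{special subschemes vs. special subvarieties} each $Z_j$ is then a special subvariety of $M$ with $\pi(Z_j)=S$, so $Z$ is a finite union of special subvarieties faithfully flat over $S$, as desired.

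There is no substantive obstacle at this last deduction: all of the content sits upstream, in Proposition~\ref{relative Manin-Mumford} (which relies on Grothendieck's Theorem~\ref{grothendieck}) and in Theorem~\ref{special subschemes vs. special subvarieties} (which relies on the Hodge-theoretic equivalence~\ref{abelian schemes vs. variation of Hodge structures} together with the moduli description~\ref{moduli of Hodge structures}). The only point that needs a moment of care is the identification ``special subvariety faithfully flat over $S$'' $=$ ``special subscheme of $M\ra S$'', the nontrivial direction being Theorem~\ref{special subschemes vs. special subvarieties} and the easy direction being the observation from Proposition~\ref{description of subdata and special subvarieties}(2) recalled above; once this is granted, the corollary follows formally.
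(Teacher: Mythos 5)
Your proposal is correct and follows exactly the route the paper intends: the corollary is stated as an immediate consequence of Proposition~\ref{relative Manin-Mumford} together with Theorem~\ref{special subschemes vs. special subvarieties}, with the easy direction of the dictionary supplied by Proposition~\ref{description of subdata and special subvarieties}(2) and the normality of $S$ coming from \cite{baily borel}. Nothing essential differs from the paper's argument.
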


\end{document}